
        
\documentclass[brochure,english,12pt]{smfbourbaki}

\usepackage[T1]{fontenc}
\usepackage{lmodern,amssymb,bm}

\usepackage[english]{babel}
\usepackage{tikz}

\usepackage[utf8]{inputenc}

\usepackage[colorlinks=true, linkcolor=blue, citecolor=red,
urlcolor=blue]{hyperref}

\usepackage[
backend=bibtex,
style=authoryear, 
citestyle=authoryear-comp,
maxnames=5
]{biblatex}
\usepackage{csquotes}

\usepackage{dutchcal}

\newtheorem{question}{Question}

\renewcommand{\hat}{\widehat}

\newcommand{\N}{\mathbb N}

\newcommand{\eqdef}{\stackrel{\mathrm{def}}{=}}

\newcommand{\x}{\boldsymbol{x}}
\newcommand{\y}{\boldsymbol{y}}

\newcommand{\mb}{\mathbb}
\newcommand*\diff{\mathop{}\!\mathrm{d}}

\newcommand{\R}{\mathbb{R}}
\newcommand{\e}{\varepsilon}
\newcommand{\mc}{\mathcal}

\newcommand{\msf}{\mathsf}

\newcommand{\MM}{\mathcal{M}}
\newcommand{\NN}{\mathcal{N}}

\newcommand{\dint}{\int\!\!\!\!\int}

\newcommand*\circled[1]{\tikz[baseline=(char.base)]{
            \node[shape=circle,draw,inner sep=2pt] (char) {#1};}}



\DeclareDelimFormat{nameyeardelim}{\addcomma\space}

\DeclareNameAlias{sortname}{given-family}

\renewcommand{\bibnamedash}{\leavevmode\raise3pt\hbox to3em{\hrulefill}\space}

\AtEveryBibitem{%
  \clearfield{issn} 
  \clearfield{isbn} 
  \clearfield{doi} 

  \ifentrytype{online}{}{
    \clearfield{url}
  }
}

\renewbibmacro{in:}{%
    \ifentrytype{article}{}{\printtext{\bibstring{in}\intitlepunct}}}

\DeclareFieldFormat[article,periodical,inreference]{number}{\mkbibparens{#1}}
\DeclareFieldFormat[article,periodical,inreference]{volume}{\mkbibbold{#1}}
\renewbibmacro*{volume+number+eid}{%
    \printfield{volume}%
    \setunit*{\addthinspace}
    \printfield{number}%
    \setunit{\addcomma\space}%
    \printfield{eid}}

\DeclareFieldFormat[article,inbook,incollection]{title}{\enquote{#1}\addcomma} 


\addbibresource{Exp1188-Eskenazis.bib} 


\addressindent 75mm    

\date{Janvier 2022}
\bbkannee{74\textsuperscript{e} année, 2021--2022}  
\bbknumero{1188}                                      

\title{Average distortion embeddings, nonlinear spectral gaps, and a metric John theorem}
\subtitle{after Assaf Naor}

\author{Alexandros Eskenazis}
\address{Trinity College\\ University of Cambridge\\ Cambridge, CB2 1TQ, United Kingdom}
\address{Institut de Math\'ematiques de Jussieu\\ CNRS \& Sorbonne Universit\'e\\ Paris, 75252, France }

\email{ae466@cam.ac.uk}

\thanks{The author was supported by a Junior Research Fellowship from Trinity College, Cambridge.}














 












\begin{document}

\maketitle


\section{Introduction} 

\noindent {\bf Preamble.} The main purpose of this survey is to present a concise exposition of some applications of the theory of nonlinear spectral gaps which can serve as a roadmap for newcomers in the field and experts alike. Having as our main focus a result (Theorem \ref{thm:john}) of \textcite{Nao21}, we shall highlight some ideas which have played a pivotal role in recent developments and mention connections with classical geometric and algorithmic questions. The material of this paper is a mere expository repackaging of a selection of such developments and any difference in presentation is solely cosmetic.

\medskip

Let $(\MM,d_\MM)$, $(\NN,d_\NN)$ be two metric spaces and $D\in[1,\infty)$. We say that $(\MM,d_\MM)$ embeds into $(\NN,d_\NN)$ with bi-Lipschitz distortion at most $D$ if there exists a scaling factor $\sigma\in(0,\infty)$ and a map $f:\MM\to\NN$ such that
\begin{equation} \label{eq:bi-lip}
\forall \ x,y\in\MM, \qquad \sigma d_\MM(x,y) \leq d_\NN\big(f(x),f(y)\big) \leq \sigma Dd_\MM(x,y).
\end{equation}
Following \textcite{Nao21}, we say that an \emph{infinite}\footnote{The study of average distortion embeddings for \emph{finite} metric spaces goes back at least to the work of \textcite{Rab03} (see also \cite{ABN11} for various related notions).} metric space $(\MM,d_\MM)$ embeds into $(\NN,d_\NN)$ with $q$-average distortion $D$, where $q>0$, if for every Borel probability measure $\mu$ on $\MM$, there exists $\sigma=\sigma_\mu\in(0,\infty)$ \mbox{and a $\sigma D$-Lipschitz map $f=f_\mu:\MM\to\NN$ with}
\begin{equation} \label{eq:av-dist}
\int\!\!\!\!\int_{\MM\times\MM} d_\NN\big( f(x),f(y)\big)^q \,\diff\mu(x)\diff\mu(y) \geq \sigma^q \dint_{\MM\times\MM} d_\MM(x,y)^q\,\diff\mu(x)\diff\mu(y).
\end{equation}
If the target space $\NN$ is a normed space, the parameter $\sigma_\mu$ can be omitted by rescaling. 

The $\theta$-snowflake of a metric space $(\MM,d_\MM)$ is the metric space $(\MM,d_\MM^\theta)$, $\theta\in(0,1]$. The primary goal of this survey is to present a self-contained proof of the following deep embedding theorem of \textcite{Nao21} in which asymptotically optimal bounds for the quadratic average distortion (i.e.~corresponding to exponent $q=2$ in equation \eqref{eq:av-dist} above) of $\tfrac{1}{2}$-snowflakes of finite-dimensional normed spaces into the separable Hilbert space $\ell_2$ are established. The, so called, \emph{average John theorem} reads as follows.

\begin{theo}[Average John]  \label{thm:john}
There exists a universal constant $C\in(0,\infty)$ such that the $\tfrac{1}{2}$-snowflake of any finite-dimensional normed space $(X,\|\cdot\|_X)$ admits an embedding into $\ell_2$ with quadratic average distortion at most $C\sqrt{\log(\mathrm{dim}(X)+1)}$.
\end{theo}

Theorem \ref{thm:john} is a metric counterpart of a classical theorem of \textcite{Joh48}, asserting that any finite-dimensional normed space embeds into $\ell_2$ with bi-Lipschitz distortion at most $\sqrt{\mathrm{dim}(X)}$. This statement is famously optimal, e.g.~for $X=\ell_1^d$ or $X=\ell_\infty^d$, yet Naor's theorem shows that an exponential improvement of the relevant distortion is possible if one relaxes the pointwise lower bound of the bi-Lipschitz condition \eqref{eq:bi-lip} to the averaged requirement \eqref{eq:av-dist} and replaces the normed space $(X,\|\cdot\|_X)$ by its $\tfrac{1}{2}$-snowflake.  Before explaining the ideas that come into the proof of Theorem \ref{thm:john}, it is worth pointing out that both of these modifications of John's theorem are necessary in order to deduce bounds for the distortion which are subpolynomial on $\mathrm{dim}(X)$. In fact, the average John theorem is optimal in three distinct ways. 

\smallskip

$\bullet$ If one is interested in bi-Lipschitz embeddings of snowflakes of normed spaces $X$ into $\ell_2$ in lieu of average distortion embeddings, then the relevant distortion has to depend polynomially on $\mathrm{dim}(X)$. Indeed, in \textcite[Lemma~2]{Nao21}, it is shown that the bi-Lipschitz distortion required to embed the $\theta$-snowflake of $\ell_\infty^d$ into $\ell_2$ is at least a constant multiple of $d^{\theta/2}$. The proof relies on metric cotype.

\smallskip

$\bullet$ The exponent $\tfrac{1}{2}$ is the least amount of snowflaking that one needs to perform in order to obtain embeddings whose quadratic average distortion depends subpolynomially on $\mathrm{dim}(X)$. More specifically, in \textcite[Lemma~13]{Nao21} it is shown that for any $\e\in(0,\tfrac{1}{2}]$, the quadratic average distortion required to embed the $(\tfrac{1}{2}+\e)$-snowflake of $\ell_1^d$ into $\ell_2$ is at least a constant multiple of $d^\e$. The proof relies on Enflo type.

\smallskip

$\bullet$ Finally, $\sqrt{\log\mathrm{dim}(X)}$ is the asymptotically optimal bound for the quadratic average distortion required to embed the $\tfrac{1}{2}$-snowflake of an arbitrary finite-dimensional space $X$ into $\ell_2$. This will be further explained (for $X=\ell_\infty^d$) in Remark \ref{rem:sharp3} below.

\smallskip

In the rest of the introduction, we shall describe the strategy of the proof of the average John theorem and introduce the necessary background. 


\subsection{Nonlinear spectral gaps} \label{subsec:nsg}

Let $\triangle^{n-1}=\{(\pi_1,\ldots,\pi_n)\in[0,1]^n: \ \sum_{i=1}^n \pi_i=1\}$ be the $n$-dimensional standard simplex. Consider a (row)-stochastic matrix $A=(a_{ij})_{i,j=1}^n \in M_n(\R)$, that is, a matrix for which $(a_{i1},\ldots,a_{in})\in\triangle^{n-1}$ for every $i\in\{1,\ldots,n\}$. Given a vector $\pi=(\pi_1,\ldots,\pi_n)\in\triangle^{n-1}$, we say that the matrix $A$ is $\pi$-reversible if $\pi_i a_{ij} = \pi_j a_{ji}$ for every $i,j\in\{1,\ldots,n\}$. These objects admit a classical probabilistic interpretation. Consider the discrete-time homogeneous Markov chain $(X_t)_{t\geq0}$ on the state space $\{1,\ldots,n\}$ with transition probabilities given by
\begin{equation}
\forall \ i,j\in\{1,\ldots,n\}, \qquad \mb{P}\{X_{t+1}=j \ | \ X_t=i\} = a_{ij},
\end{equation}
where $t\geq0$. If the transition matrix $A$ is $\pi$-reversible, then $\pi$ is also a stationary distribution for the process $(X_t)_{t\geq0}$, that is, if $X_0$ is distributed according to $\pi$ then so is $X_t$ for any $t\geq1$. This is expressed algebraically by the matrix identity $\pi A=\pi$, where $\pi$ is thought of as a row-vector. In the probabilistic framework above, reversibility simply means that the Markov process is invariant under time reversal in the sense that $(X_0,X_1,\ldots,X_T)$ has the same joint distribution as $(X_T,X_{T-1},\ldots,X_0)$ for any $T\in\N$.

Consider the Hilbert space $L_2(\pi) = (\R^n,\|\cdot\|_{L_2(\pi)})$ whose (semi-)norm is given by
\begin{equation}
\forall \ x=(x_1,\ldots,x_n)\in\R^n, \qquad \|x\|_{L_2(\pi)} = \Big( \sum_{i=1}^n \pi_i x_i^2\Big)^{\frac{1}{2}}.
\end{equation}
Analytically, the stochastic matrix $A$ is $\pi$-reversible if and only if it defines a self-adjoint contraction on $L_2(\pi)$ with real eigenvalues which we shall denote by $1=\lambda_1(A)\geq\lambda_2(A)\geq\cdots\geq\lambda_n(A)\geq-1$. The {spectral gap} of $A$ is the algebraic quantity $1-\lambda_2(A)$ which is known to encode important combinatorial properties of the matrix. It is a simple linear algebra exercise to show that the reciprocal $\gamma(A) \eqdef (1-\lambda_2(A))^{-1}$ of the spectral gap is the least constant $\gamma\in(0,\infty]$ for which the inequality
\begin{equation} \label{eq:spegap}
\forall \  x_1,\ldots,x_n\in\ell_2, \qquad \sum_{i,j=1}^n \pi_i\pi_j \|x_i-x_j\|_{\ell_2}^2 \leq \gamma \sum_{i,j=1}^n \pi_i a_{ij} \|x_i-x_j\|_{\ell_2}^2
\end{equation}
holds true. It is a well-known consequence of Cheeger's inequality  (see, e.g., \textcite{DSV03}) that upper bounds on $\gamma(A)$ are equivalent to good expansion properties of the underlying weighted graph defined by $A$.

The above analytic characterization of a spectral gap as an optimal constant in a functional inequality was the starting point for the theory of \emph{nonlinear spectral gaps}, of which Theorem \ref{thm:john} is the latest application. Let $(\MM,d_\MM)$ be a metric space and $p\in(0,\infty)$. If $\pi\in\triangle^{n-1}$ and $A$ is a $\pi$-reversible stochastic matrix, the spectral gap of $A$ with respect to $d_\MM^p$, denoted by $\gamma(A,d_\MM^p)$, is the least $\gamma\in(0,\infty]$ such that
\begin{equation} \label{eq:nsg}
\forall  \ x_1,\ldots,x_n\in\MM, \qquad \sum_{i,j=1}^n\pi_i\pi_j d_\MM(x_i,x_j)^p \leq \gamma \sum_{i,j=1}^n \pi_i a_{ij} d_\MM(x_i,x_j)^p.
\end{equation}
If the metric $d_\MM$ is inherited by a norm $\|\cdot\|$, we will denote $\gamma(A,d_\MM^p)$ by $\gamma(A,\|\cdot\|^p)$. As explained in \textcite{MN14}, unless $\MM$ is a singleton, if $\gamma(A,d_\MM^p)$ is finite then $\lambda_2(A)$ is bounded away from 1 by a positive quantity depending only on $\gamma(A,d_\MM^p)$. On the other hand, obtaining sensible upper bounds for $\gamma(A,d_\MM^p)$ in terms of the usual spectral gap $1-\lambda_2(A)$ is a notoriously hard task even for very structured metric spaces $(\MM,d_\MM)$. This difficulty reflects the fact that nonlinear spectral gap inequalities \eqref{eq:nsg} capture delicate interactions of spectral properties of the matrix $A$ and geometric characteristics of the underlying metric space $(\MM,d_\MM)$.

The study of nonlinear spectral gap inequalities \eqref{eq:nsg} has led to very fruitful investigations which have been impactful in various areas of mathematics and theoretical computer science such as metric geometry, geometric group theory, operator algebras, Alexandrov geometry and approximation algorithms. We refer, for instance, to the works of \textcite{Mat97}, \textcite{Gro03}, \textcite{Laf08,Laf09}, \textcite{Pis10}, \textcite{NS11}, \textcite{Kon12}, \textcite{MN13,MN14,MN15}, \textcite{Mim15}, \textcite{Nao14, Nao17, Nao21, ANNRW18a, ANNRW18b} (see also Section \ref{sec:applications} below for a high-level exposition of some of those). The pertinence of nonlinear spectral gaps to the study of average distortion embeddings into normed spaces and Theorem \ref{thm:john} stems from an important duality \mbox{principle which was discovered by \textcite{Nao14} and which we shall now describe.}


\subsection{Duality} \label{subsec:dual}

Fix $\pi\in\triangle^{n-1}$ and a $\pi$-reversible stochastic matrix $A\in M_n(\R)$. Let $(\MM, d_\MM)$ be a metric space, $(Y,\|\cdot\|_Y)$ be a normed space and assume that the $\theta$-snowflake of $\MM$ embeds into $Y$ with $q$-average distortion $D\in[1,\infty)$. Then, for $x_1,\ldots,x_n\in\MM$, there exist $y_1,\ldots,y_n\in Y$ such that $\|y_i-y_j\|_Y\leq Dd_\MM(x_i, x_j)^{\theta}$ for every $i,j\in\{1,\ldots,n\}$ and
\begin{equation} \label{eq:comp1}
\sum_{i,j=1}^n \pi_i \pi_j \|y_i-y_j\|_Y^q \geq \sum_{i,j=1}^n \pi_i\pi_j d_\MM(x_i,x_j)^{\theta q}.
\end{equation}
Therefore, we have
\begin{equation*}
\sum_{i,j=1}^n \pi_i\pi_j d_\MM(x_i,x_j)^{\theta q} \!\stackrel{\eqref{eq:comp1}}{\leq}\!\gamma(A,\|\cdot\|_Y^q)\sum_{i,j=1}^n \pi_i a_{ij} \|y_i-y_j\|_Y^q
 \leq \!D^q\gamma(A,\|\cdot\|_Y^q)\!\sum_{i,j=1}^n \pi_i a_{ij} d_\MM(x_i,x_j)^{\theta q}
\end{equation*}
which implies that $\gamma(A,d_\MM^{\theta q})\leq D^q \gamma(A,\|\cdot\|_Y^q)$. Moreover\footnote{As usual, we denote by $\ell_q(Y) = \big\{y=(y_n)_{n\geq1}\in Y^\N: \ \|y\|_{\ell_q(Y)} \eqdef \big(\sum_{n\geq1} \|y_n\|_Y^q\big)^{1/q} <\infty \big\}$.}, as tensorization gives the identity $\gamma(A,\|\cdot\|_Y^q) = \gamma(A,\|\cdot\|_{\ell_q(Y)}^q)$ and $\gamma(A,\|\cdot\|_W^q)$ is only determined by the finite-dimensional structure of $W$, the above simple argument shows that if the $\theta$-snowflake of $\MM$ embeds with $q$-average distortion $D\in[1,\infty)$ into any Banach space $Z$ which is finitely representable in $\ell_q(Y)$, then $\gamma(A,d_\MM^{\theta q})\leq D^q \gamma(A,\|\cdot\|_Y^q)$ for any $\pi$-reversible stochastic matrix $A\in M_n(\R)$. The first important step towards Theorem \ref{thm:john} is the following \mbox{striking converse to this implication, proven by \textcite[Theorem~1.3]{Nao14}.}

\begin{theo} [Naor's duality principle] \label{thm:duality}
Suppose that $q,D\in[1,\infty)$ and $\theta\in(0,1]$. Let $(\MM,d_\MM)$ be a metric space and $(Y,\|\cdot\|_Y)$ be a Banach space such that for every $n\in\N$ and $\pi\in\triangle^{n-1}$, every $\pi$-reversible stochastic matrix $A\in M_n(\R)$ satisfies
\begin{equation} \label{eq:d-assume}
\gamma(A,d_\MM^{\theta q}) \leq D^q \gamma(A,\|\cdot\|_Y^q).\end{equation}
Then, for any $\e>0$ the $\theta$-snowflake of $\MM$ embeds into some ultrapower\footnote{We refer to \textcite{Hei80} for background on ultraproducts of Banach spaces. For the purposes of this discussion it suffices to say that an ultrapower $Z^\mathcal{U}$ of a Banach space $Z$ is a Banach space containing $Z$ with various compactness properties such that any finite-dimensional subspace of $Z^\mathcal{U}$ embeds into $Z$ with distortion $1+\e$ for any $\e>0$.} of $\ell_q(Y)$ with $q$-average distortion at most $D+\e$.
\end{theo}

We emphasize that Theorem \ref{thm:duality} is an \emph{existential} result whose proof does not shed any light on any additional properties of the average distortion embeddings at hand. Its proof consists of an elegant Hahn--Banach separation argument which we shall present in Section \ref{sec:duality}. In the setting of the average John theorem, the metric space $\MM$ is a finite-dimensional normed space $(X,\|\cdot\|_X)$, $Y$ is the Hilbert space $\ell_2$, $q=2$ and $\theta=\tfrac{1}{2}$. As any ultrapower of $\ell_2$ is itself a Hilbert space (see \cite{Hei80}), Naor's duality theorem shows that the embedding statement of Theorem \ref{thm:john} is equivalent to the following comparison estimate for nonlinear spectral gaps.

\begin{theo} \label{thm:estimate}
Let $(X,\|\cdot\|_X)$ be a finite-dimensional normed space. Then, for every $n\in\N$ and $\pi\in\triangle^{n-1}$, every $\pi$-reversible stochastic matrix $A\in M_n(\R)$ satisfies
\begin{equation} \label{eq:estimate}
\gamma(A,\|\cdot\|_X) \leq \frac{C\log(\mathrm{dim}(X)+1)}{1-\lambda_2(A)},
\end{equation}
where $C\in(0,\infty)$ is a universal constant.
\end{theo}

Theorem \ref{thm:estimate} has implicitly appeared as a special case of a much more general result concerning nonlinear spectral gaps of complex interpolation spaces \parencite[Theorem~25]{Nao21}. This family of substantially stronger nonlinear spectral gap inequalities can be used to prove (via Theorem \ref{thm:duality}) the existence of refined average distortion embeddings of snowflakes of Banach spaces which are not captured by Theorem \ref{thm:john}. This task is undertaken in great detail in \textcite{Nao21}, yet most of these results go beyond the scope of the present survey. In Section \ref{sec:estimate}, we shall present a self-contained proof of Theorem \ref{thm:estimate} which completely avoids the complex interpolation machinery of \textcite{Nao21} and is a modification of an argument which appeared in \textcite[Section~5]{Nao18}. In Section \ref{sec:beyond}, we shall present some extensions and refinements of Theorems \ref{thm:john} and \ref{thm:estimate} and highlight some key ideas from their proofs in \textcite{Nao21}.


\subsection{Extrapolation} \label{subsec:holder}

As explained above, the forthcoming proof of Theorem \ref{thm:estimate} does not rely on any sophisticated analytic machinery beyond elementary spectral properties of matrices. We will however use the following extrapolation principle for Poincar\'e inequalities.

\begin{prop} \label{prop:extrapolation}
For every $1\leq p\leq q<\infty$ there exist $c(p,q), C(p,q)\in(0,\infty)$ such that the following conclusion holds. For every normed space $(X,\|\cdot\|_X)$, every $n\in\N$, $\pi\in\triangle^{n-1}$ and every $\pi$-reversible stochastic matrix $B\in M_n(\R)$, we have
\begin{equation} \label{eq:extrapolation}
c(p,q)\cdot \gamma(B,\|\cdot\|_X^q)^{\frac{p}{q}} \leq \gamma(B,\|\cdot\|_X^p) \leq C(p,q)\cdot \gamma(B,\|\cdot\|_X^q).
\end{equation}
\end{prop}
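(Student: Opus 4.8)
\emph{Reductions and notation.}
For a configuration $\x=(x_1,\dots,x_n)\in X^n$ and $r\ge1$, write
\[
N_r(\x)\eqdef\sum_{i,j=1}^n\pi_i\pi_j\|x_i-x_j\|_X^r\qquad\text{and}\qquad E_r(\x)\eqdef\sum_{i,j=1}^n\pi_ib_{ij}\|x_i-x_j\|_X^r,
\]
so that $\gamma(B,\|\cdot\|_X^r)=\sup_\x N_r(\x)/E_r(\x)$. The quantities compared depend only on these ratios, so they are unaffected by rescaling $\|\cdot\|_X$, and everything is finite dimensional, so no analytic subtlety arises. We may assume $1\le p<q<\infty$ (the case $p=q$ being trivial) and that $\gamma(B,\|\cdot\|_X^p),\gamma(B,\|\cdot\|_X^q)<\infty$ — by the discussion following \eqref{eq:nsg} these finiteness assertions are both equivalent to $\lambda_2(B)<1$, hence independent of the exponent. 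We shall use repeatedly that $(i,j)\mapsto\pi_ib_{ij}$ is a probability measure, that $\sum_i\pi_ib_{ij}=\pi_j$ by reversibility, and the elementary bound $N_r(\x)\asymp_r\sum_i\pi_i\|x_i-\bar\x\|_X^r$, $\bar\x\eqdef\sum_j\pi_jx_j$, which follows from convexity and the triangle inequality and lets us treat $N_r(\x)$ as an $r$-th ``variance'' of $\x$ about its barycentre.

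\emph{The left-hand inequality}, equivalently $\gamma(B,\|\cdot\|_X^q)\le C(p,q)\,\gamma(B,\|\cdot\|_X^p)^{q/p}$, is the substantive part; I would prove it by adapting the extrapolation of Poincar\'e inequalities of \textcite{Mat97}. Abbreviate $\gamma_p\eqdef\gamma(B,\|\cdot\|_X^p)$, fix $\x$, rescale so that $E_q(\x)=1$, and aim at $N_q(\x)\lesssim_{p,q}\gamma_p^{q/p}$. Telescoping the truncated metrics $\rho_t(x,y)\eqdef\min(\|x-y\|_X,t)$ across dyadic scales,
\[
\|x_i-x_j\|_X^q=\sum_{k\in\Z}\Delta_k(i,j),\qquad \Delta_k(i,j)\eqdef\rho_{2^k}(x_i,x_j)^q-\rho_{2^{k-1}}(x_i,x_j)^q\ \ge\ 0,
\]
where $\Delta_k(i,j)\le 2^{kq}$, $\Delta_k(i,j)=0$ when $\|x_i-x_j\|_X\le2^{k-1}$, and, crucially, $\Delta_k(i,j)\le 2^p\,2^{k(q-p)}\rho_{2^{k-1}}(x_i,x_j)^p$ (distinguish whether $\|x_i-x_j\|_X$ exceeds $2^{k-1}$). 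Writing $n_k$ and $e_k\ge0$ for the $\pi_i\pi_j$- and $\pi_ib_{ij}$-averages of $\Delta_k$, one has $N_q(\x)=\sum_kn_k$, $\sum_ke_k=1$, and $n_k=0$ unless $2^{k-1}<\diam(\x)$. Summing the last bound on $\Delta_k$ and applying the $p$-Poincar\'e inequality to a radial truncation of $\x$ at scale $2^{k-1}$ yields $n_k\lesssim_p 2^{k(q-p)}\gamma_p\sum_{j\le k-1}2^{-j(q-p)}e_j$; summing over the finitely many nonzero $k$ and exchanging the order of summation gives
\[
N_q(\x)\ \lesssim_{p,q}\ \gamma_p\sum_{j\in\Z}e_j\Big(\tfrac{\diam(\x)}{2^{j}}\Big)^{q-p}.
\]
The proof is then completed by showing that $\diam(\x)/2^{j}\lesssim_{p,q}\gamma_p^{1/p}$ for every scale $j$ carrying a non-negligible portion of the unit mass $\sum_je_j=1$; this is exactly where the hypothesis $N_p(\x)\le\gamma_pE_p(\x)$ is used, since it prevents the configuration from spreading over distance scales much wider than its edge $p$-energy allows.

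\emph{The right-hand inequality} $\gamma(B,\|\cdot\|_X^p)\le C(p,q)\,\gamma(B,\|\cdot\|_X^q)$ runs in the opposite direction, and here the linear structure of $X$ (rather than that of a general metric space) enters. The naive route — bound $N_p(\x)^{1/p}\le N_q(\x)^{1/q}$ by Jensen and invoke the $q$-Poincar\'e inequality — founders on a wrong-way Jensen inequality for the edge energy, $E_q(\x)^{1/q}\ge E_p(\x)^{1/p}$. Instead, pass to the barycentric form and use $\|\cdot\|_{L_p(\pi;X)}\le\|\cdot\|_{L_q(\pi;X)}$ to get
\[
N_p(\x)\ \lesssim_p\ \Big(\sum_i\pi_i\|x_i-\bar\x\|_X^q\Big)^{p/q}\ \lesssim_q\ N_q(\x)^{p/q}\ \le\ \big(\gamma_q E_q(\x)\big)^{p/q},
\]
and it remains to establish the reverse estimate $E_q(\x)^{p/q}\lesssim_{p,q}\gamma_q^{\,1-p/q}E_p(\x)$; this I would prove by the same dyadic truncation, now used to control the edge energies $E_q(\x)$ and $E_p(\x)$ against each other (with the same care for low-weight, far-apart pairs noted above). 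Multiplying the two displays yields $N_p(\x)\lesssim_{p,q}\gamma_qE_p(\x)$, which is the claim.

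\emph{Main obstacle.} The delicate point, common to both halves, is the multi-scale bookkeeping: once a configuration is decomposed into dyadic distance scales — on each of which $p$ and $q$ are interchangeable up to the explicit but \emph{unbounded} factor $2^{k(q-p)}$ — one must recombine the per-scale estimates so that precisely the right power of the spectral-gap constant survives. The two genuinely non-routine ingredients are quantifying how a Poincar\'e inequality restricts the range of scales present in a near-extremal configuration, and justifying the per-scale application of the $p$-Poincar\'e inequality through radial truncations (which are only boundedly Lipschitz in a general norm and may collapse distances between pairs lying outside the truncation ball). Everything else reduces to convexity, the triangle inequality, and the reversibility identity $\sum_i\pi_ib_{ij}=\pi_j$.
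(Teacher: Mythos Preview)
Your approach is genuinely different from the paper's, and as written it has a gap in each half.

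For the right-hand inequality $\gamma(B,\|\cdot\|_X^p)\lesssim\gamma(B,\|\cdot\|_X^q)$, the reverse edge-energy estimate $E_q(\x)^{p/q}\lesssim_{p,q}\gamma_q^{1-p/q}E_p(\x)$ you aim for is false. Take $B$ to be the transition matrix of the complete graph on $n$ vertices with uniform stationary measure; then $N_r(\x)/E_r(\x)=(n-1)/n$ for every $\x$ and every $r$, so $\gamma_q=(n-1)/n<1$. With $X=\R$ and $x_1=L,\ x_2=\cdots=x_n=0$ one has $E_r(\x)=\tfrac{2}{n}L^r$, hence $E_q(\x)^{p/q}/E_p(\x)=(n/2)^{1-p/q}\to\infty$ while $\gamma_q^{1-p/q}<1$. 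The ratio $E_q/E_p^{q/p}$ reflects only the spread of edge lengths and is not controlled by any spectral quantity, so your chain $N_p\lesssim(\gamma_q E_q)^{p/q}\lesssim\gamma_q E_p$ breaks at the second step even though the composite conclusion is (trivially, here) true. Restricting to near-extremal $\x$ does not help, since on $K_n$ every configuration is extremal.

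For the left-hand inequality, the obstacle you flag is real and your sketch does not remove it. To invoke $\gamma_p=\gamma(B,\|\cdot\|_X^p)$ at scale $2^{k-1}$ one needs a tuple $\y\in X^n$ with $\|y_i-y_j\|_X$ comparable to $\rho_{2^{k-1}}(x_i,x_j)$ from \emph{both} sides; the radial projection onto a ball is $2$-Lipschitz but, as you note, may send far-apart points lying in the same radial direction to a single point of the sphere, so no lower bound is available. Matou\v{s}ek's scalar argument truncates the \emph{function} and leans on the order structure of $\R$; there is no coordinate-free substitute in a general normed space, and the truncated metric $\rho_t$ need not be realized by any tuple in $X$.

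The paper bypasses both issues with a single device: the vector-valued Mazur map $\msf{M}_{p,q}f=f/\|f\|_X^{1-p/q}$, which sends the unit ball of $L_p(\pi;X)$ into that of $L_q(\pi;X)$ and satisfies $\|\msf{M}_{p,q}f-\msf{M}_{p,q}g\|_{L_q}\lesssim_{p,q}\|f-g\|_{L_p}^{\min(p/q,1)}$ there. Given $\x$ normalized so that $N_p(\x)=1$, one sets $g=\msf{M}_{p,q}(f-\mb{E}_\pi f)$ with $f(i)=x_i$, applies the $q$-Poincar\'e inequality to $g$, and uses the H\"older estimates for $\msf{M}_{q,p}$ and $\msf{M}_{p,q}$ to convert the variance and the edge energy back to exponent $p$. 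The argument is completely symmetric in $p$ and $q$ and yields both halves of \eqref{eq:extrapolation} with the same stroke; the linear structure of $X$ enters only through the scalar multiplication defining $\msf{M}_{p,q}$.
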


Proposition \ref{prop:extrapolation} is the vector-valued version (due to \textcite{Che16, dLdlS21}) of the extrapolation principle for Poincar\'e inequalities \parencite{Mat97}. In Section \ref{sec:snowflake}, we shall also discuss a strengthening of Proposition \ref{prop:extrapolation} and its relation to a long-standing problem in the nonlinear theory of Banach spaces.


\subsection{Historical discussion} \label{subsec:history}

Motivated by a classical theorem of \textcite{Rib76} and kickstarted by \textcite{Bou86}, the \emph{Ribe program} is a vast research program in metric geometry which aims to uncover deep structural analogies between the local theory of normed spaces and (nonlinear) metric spaces. In the nearly four decades that lapsed since Bourgain's formalization of its objectives, the Ribe program has been an extraordinary source of surprising phenomena which arise when one studies metric spaces through the lens of Banach space theory and, vice versa, when one considers normed spaces as objects in the metric category. Numerous such key insights obtained in the last two decades originate in works of Naor and his collaborators. We refer to the surveys of \textcite{Kal08}, \textcite{Nao12, Nao18}, \textcite{Bal13}, \textcite{BJ16} and \textcite{God17} and to the monograph of \textcite{Ost13} for a snapshot of some of these advances and their applications to other areas \mbox{of mathematics and theoretical computer science.}

Theorem \ref{thm:john} is a prime example of a result conceptually belonging in the Ribe program for multiple reasons. Firstly, the statement of the theorem contains a highly nonlinear operation (snowflaking) performed on a norm and the desired embedding itself is not realized by a linear operator despite the fact that both the source and the target space are linear. Moreover, as already mentioned, the proof of Theorem \ref{thm:john} relies on the theory of nonlinear spectral gaps, a large part of which has been developed in the context of the Ribe program (see the discussion on expanders with respect to Banach spaces in Section \ref{sec:super-expanders} below). Finally, as discussed in \textcite[Section~1.4]{Nao21}, Naor's initial interest in this research direction stemmed from a question regarding the embeddability of expanders into low-dimensional normed spaces raised by \textcite{ANNRW17} in the context of the approximate nearest neighbor search problem. A negative answer to this question (see Theorem \ref{thm:expanders} below) by \textcite{Nao17,Nao21} which follows easily from Theorem \ref{thm:john} shall be explained in detail in Section \ref{sec:expanders}. Theorem \ref{thm:expanders} also provides a new negative answer to an old question of \textcite{JL84} who asked whether every $n$-point metric space admits a bi-Lipschitz embedding with constant distortion into a $d$-dimensional normed space, where $d=O(\log n)$. This question had previously been answered negatively by \textcite{AdRRP92} for small distortions and \textcite{Mat96} in general. Naor's works provide a novel and more robust approach to this problem as they highlight a specific criterion (spectral gap) which implies the intrinsic high-dimensionality of the metric space at hand. Johnson and Lindenstrauss raised this question as a step towards finding a metric version of the aforementioned classical theorem of \textcite{Joh48}. A deep and impactful nonlinear John theorem was discovered via a completely different route in the influential work of \textcite{Bou85}. Quite surprisingly, Theorem \ref{thm:john}, which answers negatively the question of Johnson and Lindenstrauss, is itself a metric version of John's theorem.


\medskip

\noindent {\bf Structure of the paper.} In Sections \ref{sec:duality} and \ref{sec:snowflake} we present the proofs of Theorem \ref{thm:duality} and Proposition \ref{prop:extrapolation} respectively. In Section \ref{sec:estimate} we use Proposition \ref{prop:extrapolation} to prove Theorem \ref{thm:estimate} which, combined with Theorem \ref{thm:duality}, completes the proof of Theorem \ref{thm:john}. In Section~\ref{sec:beyond} we present some refinements of Theorems \ref{thm:john} and \ref{thm:estimate} from \textcite{Nao21} and highlight key ideas used in their proofs. Finally, Section \ref{sec:applications} contains a high-level account of further geometric and algorithmic applications of the theory of nonlinear spectral gaps.

\medskip

\noindent {\bf Asymptotic notation.} In what follows we use the convention that for $a,b\in[0,\infty]$ the notation $a\gtrsim b$ (respectively $a\lesssim b$) means that there exists a universal constant $c\in(0,\infty)$ such that $a\geq cb$ (respectively $a\leq cb$). The notations $\lesssim_\xi$ and $\gtrsim_\chi$ mean that the implicit constant $c$ depends on $\xi$ and $\chi$  respectively.

\medskip

\noindent{\bf Acknowledgements.} I am very grateful to Florent Baudier, Manor Mendel and Assaf Naor for helpful discussions and constructive feedback.


\section{Duality and average distortion} \label{sec:duality}

In this section we present the proof of Naor's duality Theorem \ref{thm:duality}. Despite the fact that the theorem is stated for an \emph{arbitrary} metric space $(\MM,d_\MM)$, the crux of the argument is the following special case in which $\MM$ is assumed to be \emph{finite}. The general case follows by a (standard yet lengthy) discretization and compactness argument which can be found in \textcite[Section~7]{Nao21}. The finitary version stated below was proven in the case that $\pi$ is the normalized counting measure in \textcite[Theorem~1.3]{Nao14}, where it is said that the argument is inspired by the proof of \textcite[Lemma~1.1]{Bal92}.

\begin{theo}[Naor's duality -- finitary version] \label{thm:dual-fin}
Suppose that $q,D\in[1,\infty)$, $\theta\in(0,1]$, $n\in\N$ and fix $\pi\in\mathrm{int}(\triangle^{n-1})$. Let $\MM=(\{x_1,\ldots,x_n\},d_\MM)$ be a metric space and $(Y,\|\cdot\|_Y)$ be a Banach space such that every $\pi$-reversible stochastic matrix $A\in M_n(\R)$
\begin{equation} \label{eq:dual-fin}
\gamma(A,d_\MM^{\theta q}) \leq D^q \gamma(A,\|\cdot\|_Y^q).
\end{equation}
Then, for any $\e>0$ there exists $m\in\N$ and a function $f=f_\e:(\MM,d_\MM^\theta)\to\ell_q^m(Y)$ which is $(D+\e)$-Lipschitz satisfying the condition
\begin{equation}
\sum_{i,j=1}^n \pi_i \pi_j \|f(x_i)-f(x_j)\|_{\ell_q^m(Y)}^q \geq \sum_{i,j=1}^n \pi_i\pi_j d_\MM(x_i,x_j)^{\theta q}.
\end{equation}
\end{theo}

\begin{proof}
It clearly suffices to assume that $\theta=1$ as otherwise we can simply apply the same result to the $\theta$-snowflake of $\MM$. Let $\mc{C}\subseteq M_n(\R)$ be the class of all symmetric $n\times n$ matrices $(c_{ij})$ for which there exist $y_1,\ldots,y_n\in Y$, not all of which are equal, with
\begin{equation}
\forall \ i,j\in\{1,\ldots,n\}, \qquad c_{ij} = \frac{\sum_{r,s=1}^n \pi_r\pi_s d_\MM(x_r, x_s)^q}{\sum_{r,s=1}^n \pi_r\pi_s \|y_r-y_s\|_Y^q} \cdot \|y_i-y_j\|_Y^q.
\end{equation} 
Moreover, let $\mc{P}\subseteq M_n(\R)$ be the class of all symmetric $n\times n$ matrices with nonnegative entries and vanishing diagonal and consider the convex hull $\mc{Q} \eqdef \mathrm{conv}(\mc{C}+\mc{P})$.

Let $T=(t_{ij})_{i,j=1}^n$ be the $n\times n$ matrix with entries given by $t_{ij}\eqdef (D+\e)^q d_\MM(x_i,x_j)^q$ for $i,j\in\{1,\ldots,n\}$. We shall prove that $T\in\mc{Q}$. Suppose that this is not the case. Then, by the Hahn--Banach separation theorem, there exists a nonzero symmetric $n\times n$ matrix $H=(h_{ij})_{i,j=1}^n$ with vanishing diagonal such that
\begin{equation} \label{eq:hb}
\inf_{(b_{ij})_{i,j=1}^n\in\mc{Q}} \sum_{i,j=1}^n h_{ij} b_{ij} \geq (D+\e)^q \sum_{i,j=1}^n h_{ij} d_\MM(x_i,x_j)^q.
\end{equation}
Since $\mc{Q}$ contains a translate of $\mc{P}$, choosing $(b_{ij})_{i,j=1}^n\in\mc{P}$ whose only nonzero entries are those indexed by $(k,\ell)$ and $(\ell,k)$, where $k\neq\ell$, we deduce that $h_{ij}\geq0$ for every $i,j\in\{1,\ldots,n\}$. Moreover, as \mbox{$\pi_i\neq0$ for every $i\in\{1,\ldots,n\}$, we can define the parameter}
\begin{equation}
\sigma \eqdef \max_{i\in\{1,\ldots,n\}} \frac{1}{\pi_i} \sum_{r\neq i} h_{ir} \in (0,\infty)
\end{equation}
and consider the matrix $A=(a_{ij})_{i,j=1}^n$ whose entries are given by
\begin{equation}
\forall \ i,j\in\{1,\ldots,n\}, \qquad a_{ij} \eqdef \begin{cases} \frac{h_{ij}}{\sigma \pi_i}, & \mbox{if } i\neq j \\ 1- \frac{1}{\sigma\pi_i} \sum_{r\neq i} h_{ir}, & \mbox{if } i=j \end{cases}.
\end{equation}
By construction, $A$ is a $\pi$-reversible stochastic matrix as the choice of $\sigma$ guarantees that its entries are nonnegative. Moreover, inequality \eqref{eq:hb} \mbox{can be equivalently rewritten as}
\begin{equation} \label{eq:in-terms-of-a}
\inf_{(b_{ij})_{i,j=1}^n \in \mc{Q}} \sum_{i,j=1}^n \pi_i a_{ij} b_{ij} \geq (D+\e)^q \sum_{i,j=1}^n \pi_i a_{ij} d_\MM(x_i,x_j)^q.
\end{equation}
Combining \eqref{eq:in-terms-of-a} with the definition \eqref{eq:nsg} of nonlinear spectral gaps, we deduce that
\begin{equation} \label{eq:du1}
\inf_{(b_{ij})_{i,j=1}^n \in \mc{Q}} \sum_{i,j=1}^n \pi_i a_{ij} b_{ij} \geq \frac{(D+\e)^q}{\gamma(A,d_\MM^q)} \sum_{i,j=1}^n \pi_i\pi_j d_\MM(x_i,x_j)^q
\end{equation}
On the other hand, since $\mc{C}\subseteq\mc{Q}$, we have
\begin{equation} \label{eq:du2}
\begin{split}
\inf_{(b_{ij})_{i,j=1}^n \in \mc{Q}} \sum_{i,j=1}^n \pi_i a_{ij} b_{ij} & \leq \inf_{(b_{ij})_{i,j=1}^n \in \mc{C}} \sum_{i,j=1}^n \pi_i a_{ij} b_{ij} 
\\& = \inf_{y_1,\ldots,y_n\in Y} \frac{\sum_{r,s=1}^n \pi_r\pi_s d_\MM(x_r, x_s)^q}{\sum_{r,s=1}^n \pi_r\pi_s \|y_r-y_s\|_Y^q} \sum_{i,j=1}^n \pi_i a_{ij} \|y_i-y_j\|_Y^q
\\ & = \frac{1}{\gamma(A,\|\cdot\|_Y^q)} \cdot \sum_{r,s=1}^n \pi_r\pi_s d_\MM(x_r,x_s)^q.
\end{split}
\end{equation}
Combining \eqref{eq:du1}, \eqref{eq:du2} and rearranging, we deduce that $\gamma(A,d_\MM^q)\geq (D+\e)^q \gamma(A,\|\cdot\|_Y^q)$ which contradicts the assumption \eqref{eq:dual-fin}, thus proving that $T\in\mc{Q}$.

Since $T\in\mc{Q}$ and all matrices in $\mc{P}$ have nonnegative entries, we deduce that there exists $m\in\N$, $(\mu_1,\ldots,\mu_m)\in\triangle^{m-1}$ and $n$-tuples of points $\{y_1(k),\ldots,y_n(k)\} \subset Y$ not all of which are equal for each $k\in\{1,\ldots,m\}$ such that
\begin{equation} \label{eq:from-Q}
(D+\e)^q d_\MM(x_i,x_j)^q \geq \sum_{k=1}^m \underbrace{\mu_k \ \frac{\sum_{r,s=1}^n \pi_r\pi_s d_\MM(x_r, x_s)^q}{\sum_{r,s=1}^n \pi_r\pi_s \|y_r(k)-y_s(k)\|_Y^q}}_{w_k} \cdot \|y_i(k)-y_j(k)\|_Y^q,
\end{equation}
for every $i,j\in\{1,\ldots,n\}$. Consider the mapping $f:\MM\to\ell_q^m(Y)$ given by
\begin{equation}
\forall \ i\in\{1,\ldots,n\}, \qquad f(x_i) =   \big(w_1^{1/q} y_i(1),\ldots,w_m^{1/q} y_i(m)\big).
\end{equation}
Then, for $i,j\in\{1,\ldots,n\}$ we have
\begin{equation}
\begin{split}
\|f(x_i)-f(x_j)\|_{\ell_q^m(Y)} = \Big( \sum_{k=1}^m w_k \|y_i(k)-y_j(k)\|_Y^q \Big)^{\frac{1}{q}} \stackrel{\eqref{eq:from-Q}}{\leq} (D+\e) d_\MM(x_i,x_j),
\end{split}
\end{equation}
which is equivalent to $\|f\|_{\mathrm{Lip}}\leq D+\e$. Finally,
\begin{equation}
\begin{split}
\sum_{i,j=1}^n \pi_i \pi_j &\|f(x_i)-f(x_j)\|_{\ell_q^m(Y)}^q = \sum_{i,j=1}^n \pi_i \pi_j \sum_{k=1}^n w_k \|y_i(k)-y_j(k)\|_Y^q
\\ & \stackrel{\eqref{eq:from-Q}}{=} \sum_{i,j=1}^n \pi_i \pi_j \sum_{k=1}^m \mu_k \ \frac{\sum_{r,s=1}^n \pi_r\pi_s d_\MM(x_r, x_s)^q}{\sum_{r,s=1}^n \pi_r\pi_s \|y_r(k)-y_s(k)\|_Y^q} \cdot \|y_i(k)-y_j(k)\|_Y^q
\\ & = \sum_{r,s=1}^n \pi_r\pi_s d_\MM(x_r,x_s)^q
\end{split},
\end{equation}
which proves the average lower bound and completes the proof.
\end{proof}

\begin{rema}
It is worth emphasizing that Naor's Theorem \ref{thm:dual-fin} is an important addition to a long list of results in which the existence of a map with favorable metric properties is proven using duality or by exploiting the cone structure of  $\ell_p$-distance matrices. We refer, for instance, to the works of \textcite{Sch38}, \textcite{BDCK66}, \textcite{Kri65}, \textcite{WW75} on isometric embeddings, \textcite{Mau74} on factorization theory, \textcite[Proposition~15.5.2]{Mat02} on bi-Lipschitz embeddings, \textcite{Bal90}, \textcite{Esk21} on metric dimension reduction and \textcite{Bal92}, \textcite{MN13} on extensions of Lipschitz mappings.
\end{rema}


\section{Extrapolation and snowflake embeddings} \label{sec:snowflake}

In this section we present the proof of Proposition \ref{prop:extrapolation}. The argument relies on some elementary properties of the vector-valued Mazur map \parencite{Maz29}. If $(\Omega,\mu)$ is a measure space, $(X,\|\cdot\|_X)$ is a normed space and $p,q\in[1,\infty)$, consider the map $\msf{M}_{p,q}:L_p(\mu;X)\to L_q(\mu;X)$ whose action on $f\in L_p(\mu;X)$ is given by
\begin{equation} \label{eq:mazur-map}
\forall \ \omega\in\Omega,\qquad (\msf{M}_{p,q}f)(\omega) = \frac{f(\omega)}{\|f(\omega)\|_X^{1-\frac{p}{q}}}
\end{equation}
when $f(\omega)\neq 0$ and $(\msf{M}_{p,q}f)(\omega)=0$ when $f(\omega)=0$. We will use the following lemma.

\begin{lemm} \label{lem:mazur}
Let $p,q\in[1,\infty)$. For any normed space $(X,\|\cdot\|_X)$ and any functions $f,g:\Omega\to X$ with $\max\{\|f\|_{L_p(\mu;X)},\|g\|_{L_p(\mu;X)}\} \leq 1$, we have
\begin{equation} \label{eq:mazur}
\big\| \msf{M}_{p,q} f - \msf{M}_{p,q} g\big\|_{L_q(\mu;X)} \lesssim_{p,q} \big\|f-g\big\|_{L_p(\mu;X)}^{\min\{\frac{p}{q},1\}}.
\end{equation}
\end{lemm}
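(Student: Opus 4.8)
The plan is to reduce \eqref{eq:mazur} to two pointwise estimates for the pointwise Mazur map $\msf{M}\colon X\to X$ given by $\msf{M}(x)=\|x\|_X^{p/q-1}x$ for $x\neq 0$ and $\msf{M}(0)=0$, so that $\msf{M}_{p,q}$ acts coordinatewise as $(\msf{M}_{p,q}f)(\omega)=\msf{M}\big(f(\omega)\big)$ and $\|\msf{M}(x)\|_X=\|x\|_X^{p/q}$. Fix $x,y\in X$; by symmetry of everything that follows we may assume $r\eqdef\|x\|_X\geq\|y\|_X\eqdef s$ (the degenerate cases where $x$ or $y$ vanishes being immediate), and we write $x=ru$, $y=sv$ with $\|u\|_X=\|v\|_X=1$. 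Everything will be extracted from the decomposition
\begin{equation*}
\msf{M}(x)-\msf{M}(y)=r^{p/q}(u-v)+\big(r^{p/q}-s^{p/q}\big)v
\end{equation*}
combined with the two elementary bounds $\|u-v\|_X\leq 2$ and $\|u-v\|_X\leq\frac{2}{r}\|x-y\|_X$, the latter following from the identity $u-v=\frac{x-y}{r}+\frac{s-r}{rs}\,y$ together with $r-s\leq\|x-y\|_X$.

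First I would treat the case $p\leq q$, where $\alpha\eqdef p/q\in(0,1]$. Subadditivity of $t\mapsto t^\alpha$ gives $r^\alpha-s^\alpha\leq(r-s)^\alpha\leq\|x-y\|_X^\alpha$, while the two bounds on $\|u-v\|_X$ combine into $r^\alpha\|u-v\|_X\leq 2\min\big\{r^\alpha,\,r^{\alpha-1}\|x-y\|_X\big\}$, which a short case distinction according to whether $\|x-y\|_X<r$ or not shows to be at most $2\|x-y\|_X^\alpha$. Hence $\|\msf{M}(x)-\msf{M}(y)\|_X\leq 3\|x-y\|_X^{p/q}$ for \emph{all} $x,y\in X$; raising this to the power $q$ and integrating over $\Omega$ (using $\alpha q=p$) already yields \eqref{eq:mazur} in this range, and the normalization hypothesis plays no role here.

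For $p>q$, set $\alpha\eqdef p/q>1$. The mean value theorem gives $r^\alpha-s^\alpha\leq\alpha r^{\alpha-1}(r-s)\leq\alpha r^{\alpha-1}\|x-y\|_X$, and the bound $\|u-v\|_X\leq\frac{2}{r}\|x-y\|_X$ gives $r^\alpha\|u-v\|_X\leq 2r^{\alpha-1}\|x-y\|_X$, so that
\begin{equation*}
\big\|\msf{M}(x)-\msf{M}(y)\big\|_X\lesssim_{p,q}\big(\|x\|_X^{p/q-1}+\|y\|_X^{p/q-1}\big)\|x-y\|_X.
\end{equation*}
Now I would raise to the power $q$, integrate over $\Omega$, and apply H\"older's inequality with the conjugate exponents $p/q$ and $p/(p-q)$: this splits off the factor $\big(\int_\Omega\|f-g\|_X^p\,\diff\mu\big)^{q/p}=\|f-g\|_{L_p(\mu;X)}^q$ and leaves a power of $\int_\Omega\big(\|f\|_X^{p/q-1}+\|g\|_X^{p/q-1}\big)^{pq/(p-q)}\diff\mu$, which after expanding the bracket (note $(p/q-1)\cdot\frac{pq}{p-q}=p$) is bounded by a constant depending only on $p$ and $q$ precisely because $\max\{\|f\|_{L_p(\mu;X)},\|g\|_{L_p(\mu;X)}\}\leq 1$. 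This gives \eqref{eq:mazur} with exponent $\min\{p/q,1\}=1$.

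The only genuinely delicate point is the pointwise estimate in the range $p\leq q$: the naive bound $r^\alpha\|u-v\|_X\leq 2r^{\alpha-1}\|x-y\|_X$ degenerates as $r\to 0$ since $\alpha-1<0$, so one has to play it against the competing trivial bound $\|u-v\|_X\leq 2$ in order to obtain a clean H\"older-type inequality valid for \emph{all} pairs $x,y$. Once the two pointwise inequalities are in hand, the rest is routine bookkeeping with H\"older's inequality.
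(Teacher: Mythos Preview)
Your argument is correct. Both you and the paper start from the same radial--angular decomposition $\msf{M}(x)-\msf{M}(y)=r^{p/q}(u-v)+(r^{p/q}-s^{p/q})v$, but you take a more self-contained route: you establish clean \emph{pointwise} inequalities on $X$ (H\"older continuity $\|\msf{M}(x)-\msf{M}(y)\|_X\leq 3\|x-y\|_X^{p/q}$ when $p\leq q$, and the weighted Lipschitz bound when $p>q$) and then integrate, invoking only H\"older's inequality in the second case. The paper instead treats the radial term by quoting the classical scalar Mazur map estimate as a black box, and handles the angular term by a change of measure $\diff\nu=\|g\|_X^p\,\diff\mu$ together with the monotonicity $L_p(\nu)\hookrightarrow L_q(\nu)$ on a probability space. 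Your approach avoids both the appeal to the scalar case and the change-of-measure trick, at the small cost of redoing by hand the subadditivity/mean-value estimates that the scalar Mazur lemma encapsulates; the paper's approach is a cleaner reduction to known facts but is a bit less direct.
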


\begin{proof}
The scalar-valued case $X=\R$ of the proposition is classical\footnote{Since $M_{p,q}h = |h|^{p/q} \mathrm{sign}(h)$, the scalar case is a consequence of the pointwise inequalities
\begin{equation*}
|\mathrm{sign}(\alpha) |\alpha|^\omega - \mathrm{sign}(\beta) |\beta|^\omega| \leq c_1(\omega) |\alpha-\beta|^\omega
\end{equation*}
and
\begin{equation*}
|\mathrm{sign}(\alpha) |\alpha|^{\frac{1}{\omega}} - \mathrm{sign}(\beta) |\beta|^{\frac{1}{\omega}}| \leq c_2(\omega) \max\{|\alpha|,|\beta|\}^{\frac{1}{\omega}-1} |\alpha-\beta|
\end{equation*}
which are valid for every $\alpha,\beta\in\R$ and $\omega\in(0,1]$.}
and can be found in \textcite[Section~9.1]{BL00}. Consider two functions $\theta, \phi:\Omega \to \{x\in X: \ \|x\|_X=1\}$ such that $f=\|f\|_X\theta$ and $g=\|g\|_X\phi$. Then, we have $\msf{M}_{p,q}f = \|f\|_X^{p/q} \theta$ and $\msf{M}_{p,q}g=\|g\|_X^{p/q} \phi$ which imply the inequality
\begin{equation}
\begin{split}
\|\msf{M}_{p,q}f - \msf{M}_{p,q}g\|_{L_q(\mu;X)} & = \big\| \|f\|_X^{\frac{p}{q}} \theta - \|g\|_X^{\frac{p}{q}}\phi\big\|_{L_q(\mu;X)}
\\ & \leq \underbrace{\big\| \|f\|_X^{\frac{p}{q}} - \|g\|_X^{\frac{p}{q}}\big\|_{L_q(\mu;\R)}}_{{\tiny \circled{1}}} + \underbrace{\big\|\|g\|_X^{\frac{p}{q}}(\theta-\phi)\big\|_{L_q(\mu;X)}}_{{\tiny \circled{2}}}.
\end{split}
\end{equation}
By the scalar-valued version of \eqref{eq:mazur} applied to $\|f\|_X$ and $\|g\|_X$, we have
\begin{equation}
\circled{1} \ \lesssim_{p,q} \big\| \|f\|_X-\|g\|_X\big\|_{L_p(\mu;\R)}^{\min\{\frac{p}{q},1\}} \leq \big\|f-g\big\|_{L_p(\mu;X)}^{\min\{\frac{p}{q},1\}}.
\end{equation}
Moreover, if $\diff\nu = \|g\|_X^p \diff\mu$, then $\nu(\Omega)\leq 1$ and thus for $q\leq p$ we have
\begin{equation} \label{eq:bound-21}
\begin{split}
\circled{2} \ & = \|\theta-\phi\|_{L_q(\nu;X)} \leq \|\theta-\phi\|_{L_p(\nu;X)} = \big\| \|g\|_X \theta - \|g\|_X \phi \big\|_{L_p(\mu;X)}
\\ & \leq \big\| \|g\|_X - \|f\|_X  \big\|_{L_p(\mu;\R)} + \big\| \|f\|_X \theta - \|g\|_X \phi \big\|_{L_p(\mu;X)} \leq 2\|f-g\|_{L_p(\mu;X)}.
\end{split}
\end{equation}
On the other hand, if $p\leq q$, H\"older's inequality gives
\begin{equation}
\begin{split}
\circled{2} \ = \|\theta-\phi\|_{L_q(\nu;X)} \leq \|\theta-\phi\|_{L_p(\nu;X)}^{\frac{p}{q}}\cdot\|\theta-\phi\|_{L_\infty(\nu;X)}^{1-\frac{p}{q}} \stackrel{\eqref{eq:bound-21}}{\leq} 2 \|f-g\|_{L_p(\mu;X)}^{\frac{p}{q}},
\end{split}
\end{equation}
which completes the proof of the lemma.
\end{proof}

Equipped with the estimates of Lemma \ref{lem:mazur}, we now ready to prove Proposition \ref{prop:extrapolation}.

\medskip

\begin{proof}[Proof of Proposition \ref{prop:extrapolation}]
We shall first prove the rightmost inequality of \eqref{eq:extrapolation}. Let $x_1,\ldots,x_n\in X$ and consider the function $f:\{1,\ldots,n\}\to X$ given by $f(i)=x_i$. Unless all the vectors $x_1,\ldots,x_n$ are equal, we can rescale so that the constraint
\begin{equation} \label{eq:normalize0}
\sum_{i,j=1}^n \pi_i\pi_j \|x_i-x_j\|_X^p = 1
\end{equation}
is satisfied, which in particular, by Jensen's inequality, implies that \mbox{$\|f-\mb{E}_\pi f\|_{L_p(\pi;X)}\leq 1$,} where $\mb{E}_\pi f \eqdef \sum_{i=1}^n \pi_if(i)$. Consider the function $g=\msf{M}_{p,q}(f-\mb{E}_\pi f):\{1,\ldots,n\}\to X$. Then, we have
\begin{equation*}
\begin{split}
1&= \sum_{i,j=1}^n  \pi_i\pi_j  \|f(i)-f(j)\|_X^p = \sum_{i,j=1}^n \pi_i\pi_j \|\msf{M}_{q,p}g(i)-\msf{M}_{q,p}g(j)\|_X^p 
\\ & \stackrel{\eqref{eq:mazur}}{\lesssim}_{p,q} \Big(\sum_{i,j=1}^n \pi_i\pi_j \|g(i)-g(j)\|_X^q\Big)^{\frac{p}{q}} \leq \gamma(B,\|\cdot\|_X^q)^{\frac{p}{q}} \Big( \sum_{i,j=1}^n \pi_i b_{ij} \|g(i)-g(j)\|_X^q\Big)^{\frac{p}{q}}
\\ & = \gamma(B,\|\cdot\|_X^q)^{\frac{p}{q}} \Big( \sum_{i,j=1}^n \pi_i b_{ij} \|\msf{M}_{p,q}(f-\mb{E}_\pi f)(i)-\msf{M}_{p,q}(f-\mb{E}_\pi f)(j)\|_X^q\Big)^{\frac{p}{q}}
\\ & \stackrel{\eqref{eq:mazur}}{\lesssim}_{p,q} \! \gamma(B,\|\cdot\|_X^q)^{\frac{p}{q}} \Big(\sum_{i,j=1}^n \pi_i b_{ij} \|f(i)-f(j)\|_X^p \Big)^{\frac{p}{q}} \!\!\! = \gamma(B,\|\cdot\|_X^q)^{\frac{p}{q}} \Big(\sum_{i,j=1}^n \pi_i b_{ij} \|x_i-x_j\|_X^p \Big)^{\frac{p}{q}},
\end{split}
\end{equation*}
where in both inequalities we used that $\max\{\|f-\mb{E}_\pi f\|_{L_p(\pi;X)}, \|g\|_{L_q(\pi;X)}\}\leq 1$. Finally, taking an infimum of the right-hand side over all $x_1,\ldots,x_n\in X$ satisfying \eqref{eq:normalize0}, we deduce that $1\lesssim_{p,q} \gamma(B,\|\cdot\|_X^q)^{p/q} \gamma(B,\|\cdot\|_X^p)^{-p/q}$ which concludes the proof.

The proof of the leftmost inequality is almost identical yet we repeat it for completeness. Let $x_1,\ldots,x_n\in X$ and consider the function $\phi:\{1,\ldots,n\}\to X$ given by $\phi(i)=x_i$. Without loss of generality, we can again assume that the constraint
\begin{equation} \label{eq:normalize}
\sum_{i,j=1}^n \pi_i\pi_j \|x_i-x_j\|_X^q = 1, 
\end{equation}
is satisfied, which implies that $\|\phi-\mb{E}_\pi\phi\|_{L_q(\pi;X)}\leq 1$ by Jensen's inequality. Consider the function $\psi=\msf{M}_{q,p}(\phi-\mb{E}_\pi\phi):\{1,\ldots,n\}\to X$. Then, we have
\begin{equation*}
\begin{split}
1 &= \sum_{i,j=1}^n \pi_i\pi_j \|\phi(i)-\phi(j)\|_X^q = \sum_{i,j=1}^n \pi_i\pi_j \|\msf{M}_{p,q}\psi (i) - \msf{M}_{p,q}\psi(j)\|_X^q
\\ & \stackrel{\eqref{eq:mazur}}{\lesssim}_{p,q} \sum_{i,j=1}^n \pi_i\pi_j \|\psi(i)-\psi(j)\|_X^p \leq \gamma(B,\|\cdot\|_X^p) \sum_{i,j=1}^n\pi_i b_{ij} \|\psi(i)-\psi(j)\|_X^p 
\\ & = \gamma(B,\|\cdot\|_X^p) \sum_{i,j=1}^n \pi_i b_{ij} \|\msf{M}_{q,p}(\phi-\mb{E}_\pi\phi)(i)-\msf{M}_{q,p}(\phi-\mb{E}_\pi\phi)(j)\|_X^p 
\\ & \stackrel{\eqref{eq:mazur}}{\lesssim}_{p,q} \gamma(B,\|\cdot\|_X^p) \Big(\sum_{i,j=1}^n \pi_i b_{ij} \|\phi(i)-\phi(j)\|_X^q\Big)^{\frac{p}{q}} = \gamma(B,\|\cdot\|_X^p) \Big(\sum_{i,j=1}^n \pi_i b_{ij} \|x_i-x_j\|_X^q\Big)^{\frac{p}{q}}
\end{split}
\end{equation*}
where in both inequalities we used that $\max\{\|\phi-\mb{E}_\pi\phi\|_{L_q(\pi;X)},\|\psi\|_{L_p(\pi;X)}\} \leq1$. Finally, taking an infimum of the right-hand side over all $x_1,\ldots,x_n\in X$ satisfying \eqref{eq:normalize}, we deduce that $1\lesssim_{p,q} \gamma(B,\|\cdot\|_X^p) \gamma(B,\|\cdot\|_X^q)^{-p/q}$ which concludes the proof.
\end{proof}

Even though the vector-valued version of Matou\v{s}ek's extrapolation theorem (Proposition \ref{prop:extrapolation}) suffices for the proof of Theorem \ref{thm:john} which will be presented in the next section, we digress to mention the following stronger result of \textcite{Nao21}.

\begin{prop} \label{prop:snowflake}
Fix $p\in(0,\infty)$ and $\theta\in(0,1)$. There exists \mbox{$D_0=D_0(p,\theta)\in(1,\infty)$} such that the $\theta$-snowflake of any normed space $(X,\|\cdot\|_X)$ embeds with $p$-average distortion $D_0$ into $X$.
\end{prop}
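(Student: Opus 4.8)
The strategy is to view Proposition~\ref{prop:snowflake} as the embedding-theoretic counterpart of a special case of the extrapolation principle, through the duality machinery of Section~\ref{sec:duality}. Apply Naor's duality Theorem~\ref{thm:duality} with $\MM=X$ (equipped with $\|\cdot\|_X$), with $Y=X$, with $q=p$, and with snowflake exponent $\theta$: granting for the moment that its output, which a priori lands in an ultrapower of $\ell_p(X)$, can be taken inside $X$ itself (a point I return to in the last paragraph), the existence of the sought embedding reduces to the single nonlinear spectral gap comparison
\[\gamma(A,\|\cdot\|_X^{\theta p})\ \le\ D_0^{\,p}\,\gamma(A,\|\cdot\|_X^{p}),\]
valid for every $n\in\N$, every $\pi\in\triangle^{n-1}$ and every $\pi$-reversible stochastic matrix $A\in M_n(\R)$, with $D_0=D_0(p,\theta)$. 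By the easy direction recalled in Subsection~\ref{subsec:dual} this comparison is in any case forced by such an embedding, since $X$ is finitely representable in $\ell_p(X)$, so it is genuinely the crux.

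This comparison is nothing but the extrapolation inequality of Proposition~\ref{prop:extrapolation}, read for the ordered pair of exponents $\theta p<p$. If $\theta p\ge 1$, the right-hand inequality in \eqref{eq:extrapolation}, applied with $(\theta p,p)$ in the role of $(p,q)$, gives at once $\gamma(A,\|\cdot\|_X^{\theta p})\lesssim_{p,\theta}\gamma(A,\|\cdot\|_X^{p})$, so one may take $D_0^{\,p}=C(\theta p,p)$. If $\theta p<1$ — which can occur only when $p<1/\theta$ — one uses the analogue of Proposition~\ref{prop:extrapolation} for exponents in $(0,1]$ (directly for the pair $(\theta p,p)$ if $p\le 1$, otherwise by chaining the estimate between $\theta p$ and $1$ with \eqref{eq:extrapolation} between $1$ and $p$); the only adjustment is that the vector-valued Mazur map bound of Lemma~\ref{lem:mazur} must be run with exponents below $1$, where $L_p$ is merely quasi-normed — the scalar pointwise estimates quoted in the footnote of Lemma~\ref{lem:mazur} are valid for all positive exponents, and the triangle inequality used to pass to the vector-valued statement is replaced by the $p$-triangle inequality at the price of another constant depending only on $p$. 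Either way one obtains $D_0=D_0(p,\theta)$.

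The point that I expect to demand genuine work is the identification of the target space. Strictly, Theorem~\ref{thm:duality} produces an embedding of $(X,\|\cdot\|_X^\theta)$ into an ultrapower of $\ell_p(X)$ — whose nonlinear spectral gap with respect to $\|\cdot\|^p$ coincides with that of $X$ by tensorization, which is precisely what legitimises the reduction above — and since $\ell_p(X)$ need not be finitely representable in $X$, no soft argument brings the target back to $X$. To handle this I would return to the finitary duality Theorem~\ref{thm:dual-fin} and try to exploit the feature peculiar to the present setting — that the finite metric space being embedded is a finite subset $\{x_1,\dots,x_n\}$ of the \emph{normed} space $X$ — so as to run the Hahn--Banach separation with $m=1$ in \eqref{eq:from-Q}, i.e. to show that an optimal family of witnesses $(y_i)$ can be chosen inside a single copy of $X$. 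Absent such a clean improvement, one would instead build the embedding explicitly for each finitely supported $\mu$, as a multi-scale superposition of radial snowflaking maps $x\mapsto(x-c)\,\|x-c\|_X^{\theta-1}$ — each $O_\theta(1)$-Hölder of exponent $\theta$ and carrying the distance to $c$ to its $\theta$-th power — with the centres $c$ and scales chosen so that on $\mu\times\mu$-average the images are pulled apart by a definite multiple of $\dint_{X\times X}\|x-y\|_X^{\theta p}\,\diff\mu(x)\diff\mu(y)$; establishing that averaged lower bound for an arbitrary norm, uniformly in $\mu$, is the genuinely delicate step.
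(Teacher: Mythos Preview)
Your duality-plus-extrapolation route has the gap you yourself flag, and it is genuine: Theorem~\ref{thm:duality} with $Y=X$ and $q=p$ lands in an ultrapower of $\ell_p(X)$, and for a general normed space $X$ there is no reason $\ell_p(X)$ should be finitely representable in $X$ (already $X=\ell_2$, $p\neq2$ fails). Your hope (a) of forcing $m=1$ in the Hahn--Banach output of Theorem~\ref{thm:dual-fin} is not realistic: the number $m$ arises from Carath\'eodory's theorem applied to the convex hull $\mc{Q}$, and there is no structural feature of the problem that collapses this convex combination to a single witness. So the primary line of argument does not close, and the logic is in fact inverted relative to the paper: the survey explicitly records that Proposition~\ref{prop:extrapolation} is a \emph{formal consequence} of Proposition~\ref{prop:snowflake} (via the easy direction of duality), not the other way around, and that Proposition~\ref{prop:snowflake} in turn gives sharper constants in \eqref{eq:extrapolation} than the Mazur-map proof of Section~\ref{sec:snowflake}.

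Your fallback (b) is, however, exactly the right idea and is the approach taken in \textcite{Nao21}: the paper does not reproduce Naor's proof of Proposition~\ref{prop:snowflake} but states that it ``also relies on properties of the vector-valued Mazur map~\eqref{eq:mazur-map}.'' The radial maps $x\mapsto(x-c)\|x-c\|_X^{\theta-1}$ you write down are precisely (shifts of) the pointwise Mazur map $\msf{M}_{1,1/\theta}$, and Lemma~\ref{lem:mazur} already gives the uniform $\theta$-H\"older upper bound. What remains --- and what you correctly call the delicate step --- is the averaged lower bound for a suitable choice of centre(s); this is where the real work lies, and it is carried out directly in \textcite{Nao21} rather than via duality. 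In short: your ``fallback'' is the main argument, and your ``main argument'' is the corollary.
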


The discussion preceeding the statement of Theorem \ref{thm:duality} shows that Proposition \ref{prop:extrapolation} is a formal consequence of Proposition \ref{prop:snowflake}, whose proof also relies on properties of the vector-valued Mazur map \eqref{eq:mazur-map}. In fact, as explained in \textcite[Remark~47]{Nao21}, Proposition \ref{prop:snowflake} implies improved bounds for the parameters $c(p,q), C(p,q)$ appearing in Proposition \ref{prop:extrapolation}. Proposition \ref{prop:snowflake} is the average distortion analogue of the following classical open problem in the nonlinear theory of Banach spaces.

\begin{question} \label{q:snowflake}
Does there exist $\theta\in(0,1)$ and an infinite-dimensional Banach space $(X,\|\cdot\|_X)$ whose $\theta$-snowflake does not admit a bi-Lipschitz embedding into $X$?
\end{question}

A classical result of \textcite{Sch38} implies that for every $\theta\in(0,1)$, the $\theta$-snowflake of $L_2$ admits an isometric embedding into $L_2$. Schoenberg's theorem was later extended by \textcite{BDCK66}, who showed that for every $p\in(0,2]$ and $\theta\in(0,1)$, the $\theta$-snowflake of $L_p$ admits an isometric embedding into $L_p$. Despite decades of attention, Question \ref{q:snowflake} remains stubbornly open even for the spaces $X=L_p$, where $p\in(2,\infty)$. In the forthcoming work of \textcite{EN21}, it is proven that arbitrarily small logarithmic perturbations of this question have a negative answer. More precisely, it is shown that for every $\theta\in(0,1)$, $\eta\in(0,\min\{\theta,1-\theta\})$ and $p\in(2,\infty)$, the metric transforms $(L_p,\omega_{\theta,\eta}\circ d_{L_p})$ of $L_p$ do not admit a bi-Lipschitz embedding into $L_p$, where 
\begin{equation}
\omega_{\theta,\eta}(t) = t^\theta \log^\eta (1+t) \qquad \mbox{or} \qquad\omega_{\theta,\eta}(t) = \frac{t^\theta}{1+\log^\eta(1+t)}.
\end{equation}
We refer to \textcite[Chapter~3]{Esk19} for further results in this direction.


\section{Proof of the average John theorem} \label{sec:estimate}

Having established the duality principle of Theorem \ref{thm:duality} and the extrapolation inequalities of Proposition \ref{prop:extrapolation}, we are well equipped to proceed to the proof of Theorem \ref{thm:john} via Theorem \ref{thm:estimate}. We start with some preliminary properties of \emph{nonlinear Rayleigh quotients} which will help us analyze nonlinear spectral gaps. The following simplification of the original proof \mbox{of Theorem \ref{thm:john} was sketched in \textcite[Remark~31]{Nao21}.}


\subsection{Nonlinear Rayleigh quotients} 
Fix $p\geq1$, a metric space $(\MM,d_\MM)$ and a probability measure $\pi\in\triangle^{n-1}$. Let $L_p(\pi;\MM)$ be the metric space $(\MM^n,d_{L_p(\pi,\MM)})$ whose metric is given by 
\begin{equation} 
d_{L_p(\pi;\MM)}(\x,\y)= \Big(\sum_{i=1}^n \pi_i d_\MM(x_i,y_i)^p\Big)^{\frac{1}{p}},
\end{equation}
where $\x=(x_1,\ldots,x_n), \y=(y_1,\ldots,y_n)\in\MM^n$. Moreover, we shall use the ad hoc notation $L_p(\pi;\MM)^\dagger$ for the subset $L_p(\pi;M)\setminus\{(x,\ldots,x): \ x\in\MM\}$ of $L_p(\pi;\MM)$.

Let $A\in M_n(\R)$ be a row-stochastic matrix that is $\pi$-stationary (in the sense that $\pi A=\pi$) and $\x=(x_1,\ldots,x_n)\in L_p(\pi;\MM)^\dagger$. Following \textcite[Section~5.1]{Nao18}, we consider the corresponding nonlinear Rayleigh quotient given by
\begin{equation} \label{eq:nrq}
\mc{R}(\x;A,d_\MM^p) \eqdef \frac{\sum_{i,j=1}^n \pi_i a_{ij} d_\MM(x_i,x_j)^p}{\sum_{i,j=1}^n \pi_i\pi_j d_\MM(x_i,x_j)^p}.
\end{equation}
By definition, if $A$ is $\pi$-reversible, the nonlinear spectral gap \eqref{eq:nsg} satisfies
\begin{equation} \label{eq:nrq-sg}
\gamma(A,d_\MM^p) = \sup_{\x\in L_p(\pi;\MM)^\dagger} \frac{1}{\mc{R}(\x;A,d_\MM^p)}.
\end{equation}
We will need the following properties of nonlinear Rayleigh quotients.

\begin{lemm} \label{lem:nrq}
Let $(\MM,d_\MM)$ be a metric space, $p\geq1$, $\lambda\in[0,1]$ and $\pi\in\triangle^{n-1}$. If $A,B\in M_n(\R)$ are $\pi$-stationary stochastic matrices and $\x\in L_p(\pi;\MM)^\dagger$, then we have
\begin{enumerate}
\item[(i)] $\mc{R}(\x;\lambda A+(1-\lambda)B,d_\MM^p) = \lambda \mc{R}(\x;A,d_\MM^p) + (1-\lambda)\mc{R}(\x;B,d_\MM^p)$.
\item[(ii)] $\mc{R}(\x;\lambda A+(1-\lambda)\msf{Id}_n,d_\MM^p) = \lambda \mc{R}(\x;A,d_\MM^p)$, where $\msf{Id}_n$ is the identity matrix.
\item[(iii)] $\mc{R}(\x;AB,d_\MM^p)^{\frac{1}{p}} \leq \mc{R}(\x;A,d_\MM^p)^{\frac{1}{p}}+\mc{R}(\x;B,d_\MM^p)^{\frac{1}{p}}$.
\item[(iv)] $\mc{R}(\x;B^t,d_\MM^p) \leq t^p \mc{R}(\x;B,d_\MM^p)$ for every $t\in\N$.
\end{enumerate}
\end{lemm}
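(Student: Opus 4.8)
The plan is to exploit the elementary fact that in \eqref{eq:nrq} the denominator $\sum_{i,j=1}^n \pi_i\pi_j d_\MM(x_i,x_j)^p$ does not depend on the matrix, whereas the numerator depends \emph{linearly} on its entries. Write $\Sigma(\x) \eqdef \sum_{i,j=1}^n \pi_i \pi_j d_\MM(x_i,x_j)^p > 0$ for $\x \in L_p(\pi;\MM)^\dagger$ and $\mc{N}(\x;A) \eqdef \sum_{i,j=1}^n \pi_i a_{ij} d_\MM(x_i,x_j)^p$, so that $\mc{R}(\x;A,d_\MM^p) = \mc{N}(\x;A)/\Sigma(\x)$. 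Then (i) is immediate from the identity $\mc{N}(\x;\lambda A+(1-\lambda)B) = \lambda\mc{N}(\x;A) + (1-\lambda)\mc{N}(\x;B)$, upon noting that $\lambda A+(1-\lambda)B$ is again a $\pi$-stationary stochastic matrix; and (ii) is the special case $B=\msf{Id}_n$ of (i) together with the observation that $\mc{N}(\x;\msf{Id}_n) = \sum_{i=1}^n \pi_i d_\MM(x_i,x_i)^p = 0$.

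The heart of the lemma is the subadditivity property (iii), which I would prove probabilistically. Since $A$ and $B$ are $\pi$-stationary stochastic matrices, so is $AB$; introduce random indices $I,K,J \in \{1,\ldots,n\}$ with $\mb{P}\{I=i\}=\pi_i$, $\mb{P}\{K=k \mid I=i\} = a_{ik}$ and $\mb{P}\{J=j\mid K=k,\, I=i\} = b_{kj}$. Then $\mc{N}(\x;AB) = \mb{E}\big[d_\MM(x_I,x_J)^p\big]$, and, crucially, the stationarity $\pi A = \pi$ forces $\mb{P}\{K=k\} = \sum_i \pi_i a_{ik} = \pi_k$, so that $\mb{E}\big[d_\MM(x_I,x_K)^p\big] = \mc{N}(\x;A)$ and $\mb{E}\big[d_\MM(x_K,x_J)^p\big] = \mc{N}(\x;B)$. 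Applying the triangle inequality in $\MM$ pointwise and then the Minkowski inequality in $L_p$ of the probability space on which $(I,K,J)$ is defined gives
\begin{equation*}
\mc{N}(\x;AB)^{\frac1p} = \big\|d_\MM(x_I,x_J)\big\|_{L_p} \leq \big\|d_\MM(x_I,x_K)\big\|_{L_p} + \big\|d_\MM(x_K,x_J)\big\|_{L_p} = \mc{N}(\x;A)^{\frac1p} + \mc{N}(\x;B)^{\frac1p},
\end{equation*}
and dividing by $\Sigma(\x)^{1/p}$ yields (iii).

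Finally, (iv) follows from (iii) by induction on $t$: the case $t=1$ is trivial, and if $\mc{R}(\x;B^t,d_\MM^p)^{1/p} \leq t\,\mc{R}(\x;B,d_\MM^p)^{1/p}$, then applying (iii) to the product $B^t\cdot B$ gives $\mc{R}(\x;B^{t+1},d_\MM^p)^{1/p} \leq (t+1)\,\mc{R}(\x;B,d_\MM^p)^{1/p}$, and raising to the $p$-th power completes the induction. The only genuinely non-formal point is (iii): one must recognize that the Rayleigh quotient of a product is subadditive under the $L_p$-triangle inequality \emph{precisely because} $\pi$ is stationary for the first factor, which is what makes the middle term $\mb{E}\big[d_\MM(x_K,x_J)^p\big]$ collapse to $\mc{N}(\x;B)$ rather than to a weighted variant with the wrong marginal on $K$.
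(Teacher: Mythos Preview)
Your proof is correct and follows essentially the same approach as the paper: (i) and (ii) are handled identically, and for (iii) your probabilistic phrasing with the coupling $(I,K,J)$ having joint law $\pi_i a_{ik} b_{kj}$ is just a repackaging of the paper's explicit triple sum, with the same use of the triangle inequality in $\MM$, Minkowski in $L_p$, and the stationarity identity $\sum_i \pi_i a_{ik}=\pi_k$; (iv) is then obtained by iteration in both accounts.
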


\begin{proof}
The first property is evident from the definition \eqref{eq:nrq} and the second follows from (i) since $\mc{R}(\x;\msf{Id}_n,d_\MM^p)=0$. Moreover, (iv) follows by iterating (iii) so we are left to prove that. Notice that $AB$ is $\pi$-stationary and the triangle inequality gives
\begin{equation} \label{eq:iii}
\begin{split}
\Big(\sum_{i,j=1}^n& \pi_i (AB)_{ij} d_\MM(x_i,x_j)^p\Big)^{\frac{1}{p}} \leq \Big(\sum_{i,j=1}^n \pi_i \sum_{k=1}^n a_{ik}b_{kj} \big(d_\MM(x_i,x_k)+d_\MM(x_k,x_j)\big)^p\Big)^{\frac{1}{p}}
\\ & \leq \Big( \sum_{i,j,k=1}^n \pi_ia_{ik}b_{kj} d_\MM(x_i,x_k)^p \Big)^{\frac{1}{p}} + \Big( \sum_{i,j,k=1}^n \pi_i a_{ik}b_{kj} d_\MM(x_k,x_j)^p \Big)^{\frac{1}{p}}
\\ & = \Big( \sum_{i,k=1}^n \pi_i a_{ik} d_\MM(x_i,x_k)^p\Big)^{\frac{1}{p}} + \Big( \sum_{k,j=1}^n \pi_k b_{kj} d_\MM(x_k,x_j)^p\Big)^{\frac{1}{p}},
\end{split}
\end{equation}
where in the last equality we used the stationarity of $A$ in the form $\pi_k = \sum_{i=1}^n \pi_i a_{ik}$. The desired inequality (iii) follows from \eqref{eq:iii} after renormalizing.
\end{proof}

Apart from the elementary properties of Lemma \ref{lem:nrq}, we shall also need the following standard computation of nonlinear Rayleigh quotients in Hilbert space. Recall that for every normed space $(X,\|\cdot\|_X)$, every matrix $B\in M_n(\R)$ induces a linear operator $B\otimes\msf{Id}_X:L_p(\pi;X)\to L_p(\pi;X)$ that is given by $(B\otimes\msf{Id}_X)(x_1,\ldots,x_n) = (\sum_{j=1}^n b_{ij} x_j)_{i=1}^n$.

\begin{lemm} \label{lem:nrq-hilbert}
Fix $\pi\in\triangle^{n-1}$ and let $B\in M_n(\R)$ be a $\pi$-reversible stochastic matrix. For every Hilbert space $(\mc{H},\|\cdot\|_{\mc{H}})$ and $\x\in L_2(\pi;\mc{H})^\dagger$ with $\sum_{i=1}^n\pi_i x_i=0$, we have
\begin{equation}
\mc{R}(\boldsymbol{x};B^2,\|\cdot\|_{\mc{H}}^2) = 1-\frac{\|(B\otimes\msf{Id}_\mc{H})\x\|_{L_2(\pi;\mc{H})}^2}{\|\x\|_{L_2(\pi;\mc{H})}^2}.
\end{equation}
\end{lemm}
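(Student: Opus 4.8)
The plan is to expand both the numerator and the denominator of the nonlinear Rayleigh quotient $\mc{R}(\x;B^2,\|\cdot\|_{\mc H}^2)$ via the Hilbertian polarization identity $\|u-v\|_{\mc H}^2 = \|u\|_{\mc H}^2+\|v\|_{\mc H}^2-2\langle u,v\rangle_{\mc H}$ and then to simplify using the $\pi$-stationarity and $\pi$-reversibility properties inherited by $B^2$ from $B$. First I would record the elementary identity that for \emph{any} $\pi$-stationary stochastic matrix $C=(c_{ij})_{i,j=1}^n$ and any $\y=(y_1,\dots,y_n)\in\mc H^n$,
\[
\sum_{i,j=1}^n \pi_i c_{ij}\|y_i-y_j\|_{\mc H}^2 = 2\|\y\|_{L_2(\pi;\mc H)}^2 - 2\sum_{i,j=1}^n \pi_i c_{ij}\langle y_i,y_j\rangle_{\mc H},
\]
because the two quadratic terms collapse: $\sum_{i,j}\pi_i c_{ij}\|y_i\|_{\mc H}^2=\sum_i\pi_i\|y_i\|_{\mc H}^2$ using $\sum_j c_{ij}=1$, and $\sum_{i,j}\pi_i c_{ij}\|y_j\|_{\mc H}^2=\sum_j\pi_j\|y_j\|_{\mc H}^2$ using $\sum_i\pi_i c_{ij}=\pi_j$.

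Next I would apply this identity twice. Taking $C$ to be the rank-one $\pi$-stationary stochastic matrix $c_{ij}=\pi_j$ handles the denominator of $\mc{R}$: the cross term becomes $\big\|\sum_{i=1}^n\pi_i x_i\big\|_{\mc H}^2$, which vanishes by the centering hypothesis $\sum_{i=1}^n\pi_i x_i=0$, so the denominator equals $2\|\x\|_{L_2(\pi;\mc H)}^2$. Taking $C=B^2$ (which is row-stochastic, $\pi$-stationary, and $\pi$-reversible, since $\pi B^2=(\pi B)B=\pi$ and $B$ self-adjoint on $L_2(\pi)$ forces $B^2$ self-adjoint there) handles the numerator: the cross term is $\sum_{i,j}\pi_i(B^2)_{ij}\langle x_i,x_j\rangle_{\mc H}=\langle\x,(B^2\otimes\msf{Id}_{\mc H})\x\rangle_{L_2(\pi;\mc H)}$, and because $\pi$-reversibility of $B$ says precisely that $B\otimes\msf{Id}_{\mc H}$ is self-adjoint on $L_2(\pi;\mc H)$, this equals $\langle(B\otimes\msf{Id}_{\mc H})\x,(B\otimes\msf{Id}_{\mc H})\x\rangle_{L_2(\pi;\mc H)}=\|(B\otimes\msf{Id}_{\mc H})\x\|_{L_2(\pi;\mc H)}^2$. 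Dividing numerator by denominator yields the claimed formula.

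I do not anticipate a genuine obstacle here: this is a routine Hilbert-space computation. The only points demanding a little care are verifying that $B^2$ inherits stationarity and reversibility from $B$, and the bookkeeping that rewrites the bilinear sum $\sum_{i,j}\pi_i(B^2)_{ij}\langle x_i,x_j\rangle_{\mc H}$ as $\|(B\otimes\msf{Id}_{\mc H})\x\|_{L_2(\pi;\mc H)}^2$ — both of which are immediate once one expresses everything in terms of the operator $B\otimes\msf{Id}_{\mc H}$ acting on $L_2(\pi;\mc H)$ and uses its self-adjointness.
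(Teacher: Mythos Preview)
Your proposal is correct and follows essentially the same approach as the paper: expand $\|x_i-x_j\|_{\mc H}^2$ via polarization, use stochasticity and $\pi$-stationarity to reduce the quadratic terms to $2\|\x\|_{L_2(\pi;\mc H)}^2$, and use the self-adjointness of $B\otimes\msf{Id}_{\mc H}$ on $L_2(\pi;\mc H)$ to identify the cross term in the numerator with $\|(B\otimes\msf{Id}_{\mc H})\x\|_{L_2(\pi;\mc H)}^2$. The only cosmetic difference is that you first isolate the general identity for an arbitrary $\pi$-stationary stochastic matrix and then specialize it twice, whereas the paper performs the two computations directly.
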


\begin{proof}
Let $\langle\cdot,\cdot\rangle_\mc{H}$ be the inner product of $\mc{H}$ and notice that
\begin{equation}
\sum_{i,j=1}^n \pi_i\pi_j \|x_i-x_j\|_\mc{H}^2 = 2 \sum_{i=1}^n\pi_i \|x_i\|_\mc{H}^2 - 2\left\langle \sum_{i=1}^n\pi_i x_i, \sum_{j=1}^n \pi_jx_j\right\rangle_\mc{H} = 2\|\x\|_{L_2(\pi;\mc{H})}^2,
\end{equation}
since $\sum_{i=1}^n\pi_i x_i=0$. Moreover, since $B^2$ is $\pi$-stationary and stochastic, we have
\begin{equation*}
\begin{split}
&\sum_{i,j=1}^n \pi_i (B^2)_{ij} \|x_i-x_j\|_\mc{H}^2 = \sum_{i,j=1}^n \pi_i (B^2)_{ij} (\|x_i\|_\mc{H}^2+\|x_j\|_\mc{H}^2)-2\sum_{i=1}^n \pi_i \left\langle x_i, \sum_{j=1}^n (B^2)_{ij} x_j\right\rangle_\mc{H}
\\ & = 2 \|\x\|_{L_2(\pi;\mc{H})}^2 - 2 \sum_{i=1}^n \pi_i \big\langle x_i, \big( (B^2\otimes\msf{Id}_\mc{H})\x\big)_i\big\rangle_\mc{H} = 2 \|\x\|_{L_2(\pi;\mc{H})}^2 - 2\|(B\otimes\msf{Id}_\mc{H})\x\|_{L_2(\pi;\mc{H})}^2,
\end{split}
\end{equation*}
where in the last equality we additionally used the $\pi$-reversibility of $B$. The conclusion now readily follows by the definition \eqref{eq:nrq} of nonlinear Rayleigh quotients.
\end{proof}


\subsection{Proof of Theorem \ref{thm:estimate}}

In the proof of Theorem \ref{thm:estimate} we will use the following pointwise estimate of nonlinear Rayleigh quotients of normed spaces which are isomorphic to a Hilbert space.

\begin{lemm} \label{lem:pointwise}
Let $(X,\|\cdot\|_X)$ be a normed space and $D\in[1,\infty)$. Suppose that there exists a Hilbertian norm $\|\cdot\|_\mc{H}:X\to\R_+$ such that
\begin{equation} \label{eq:isomorph}
\forall \ y\in X, \qquad \|y\|_\mc{H}\leq \|y\|_X \leq D \|y\|_\mc{H}.
\end{equation}
Then, for every $\pi\in\triangle^{n-1}$ and every $\pi$-reversible stochastic matrix $B\in M_n(\R)$,
\begin{equation} \label{eq:iso-impli}
\mc{R}(\x;B^2,\|\cdot\|_\mc{H}^2) \geq 1-\eta^2 \quad \Longrightarrow \quad \mc{R}(\x;B,\|\cdot\|_X^2) \geq \frac{(1-\eta D)^2}{4},
\end{equation}
where $\x\in L_2(\pi;X)^\dagger$ and $\eta\in(0,1/D)$.
\end{lemm}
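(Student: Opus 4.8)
The plan is to read off from the hypothesis the fact that $\x$ is an approximate top eigenvector of $B$ on the Hilbert space $L_2(\pi;\mc{H})$, and then to transport that information to the norm $\|\cdot\|_X$ while spending the isomorphism constant $D$ only on a negligible term. Both nonlinear Rayleigh quotients in \eqref{eq:iso-impli} depend on $\x$ only through the pairwise quantities $\|x_i-x_j\|_{\mc{H}}$ and $\|x_i-x_j\|_X$, which are unchanged if we translate $\x$ by a fixed vector of $X$; so I may assume $\sum_{i=1}^n\pi_ix_i=0$ (one may also assume the denominator $\sum_{i,j}\pi_i\pi_j\|x_i-x_j\|_X^2$ is positive, else there is nothing to prove, and then, passing to the support of $\pi$ — harmless since $B$ is $\pi$-reversible — that $\pi$ has full support). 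Write $\y\eqdef(B\otimes\msf{Id}_{\mc{H}})\x$, that is $y_i=\sum_{j=1}^n b_{ij}x_j$. By Lemma \ref{lem:nrq-hilbert} applied with the Hilbertian norm $\|\cdot\|_{\mc{H}}$, the hypothesis $\mc{R}(\x;B^2,\|\cdot\|_{\mc{H}}^2)\ge 1-\eta^2$ is exactly the statement
\begin{equation*}
\|\y\|_{L_2(\pi;\mc{H})}\le \eta\,\|\x\|_{L_2(\pi;\mc{H})}.
\end{equation*}

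Next I bound the numerator and the denominator of $\mc{R}(\x;B,\|\cdot\|_X^2)$ separately. Since $(b_{ij})_{j=1}^n$ is a probability vector, $x_i-y_i=\sum_j b_{ij}(x_i-x_j)$, so the triangle inequality and Jensen's inequality give $\|x_i-y_i\|_X^2\le\sum_j b_{ij}\|x_i-x_j\|_X^2$; summing against $\pi_i$ yields $\|\x-\y\|_{L_2(\pi;X)}^2\le\sum_{i,j}\pi_i b_{ij}\|x_i-x_j\|_X^2$. For the denominator I use only the crude estimate $\|x_i-x_j\|_X\le\|x_i\|_X+\|x_j\|_X$, which gives $\sum_{i,j}\pi_i\pi_j\|x_i-x_j\|_X^2\le 4\|\x\|_{L_2(\pi;X)}^2$ without invoking the isomorphism \eqref{eq:isomorph} at all.

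It remains to compare $\|\x\|_{L_2(\pi;X)}$ with $\|\x-\y\|_{L_2(\pi;X)}$, and this is the only point at which \eqref{eq:isomorph} is used — and only on the small vector $\y$. Minkowski's inequality in $L_2(\pi;X)$ gives $\|\x\|_{L_2(\pi;X)}\le\|\x-\y\|_{L_2(\pi;X)}+\|\y\|_{L_2(\pi;X)}$, while \eqref{eq:isomorph} together with the displayed estimate gives $\|\y\|_{L_2(\pi;X)}\le D\|\y\|_{L_2(\pi;\mc{H})}\le D\eta\|\x\|_{L_2(\pi;\mc{H})}\le D\eta\|\x\|_{L_2(\pi;X)}$. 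Since $\eta<1/D$, rearranging yields $\|\x\|_{L_2(\pi;X)}\le(1-\eta D)^{-1}\|\x-\y\|_{L_2(\pi;X)}$. Chaining the three estimates,
\begin{equation*}
\sum_{i,j=1}^n\pi_i\pi_j\|x_i-x_j\|_X^2\le 4\|\x\|_{L_2(\pi;X)}^2\le\frac{4}{(1-\eta D)^2}\,\|\x-\y\|_{L_2(\pi;X)}^2\le\frac{4}{(1-\eta D)^2}\sum_{i,j=1}^n\pi_i b_{ij}\|x_i-x_j\|_X^2,
\end{equation*}
which is precisely the asserted inequality $\mc{R}(\x;B,\|\cdot\|_X^2)\ge(1-\eta D)^2/4$.

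The genuinely delicate point, and the step I expect to be the crux, is the bookkeeping of the constant $D$: the $X$-denominator must be bounded directly by $4\|\x\|_{L_2(\pi;X)}^2$ rather than by the Hilbertian energy $2D^2\|\x\|_{L_2(\pi;\mc{H})}^2$ — the latter would lose a factor $D^2$ and destroy the bound — so that \eqref{eq:isomorph} is invoked only for the perturbation $\y$, whose $\mc{H}$-norm is controlled by $\eta$. Everything else is a routine combination of the triangle, Jensen and Minkowski inequalities with Lemma \ref{lem:nrq-hilbert}.
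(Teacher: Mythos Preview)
Your proof is correct and follows essentially the same route as the paper's: translate to mean zero, invoke Lemma~\ref{lem:nrq-hilbert} to read the hypothesis as $\|(B\otimes\msf{Id})\x\|_{L_2(\pi;\mc{H})}\le\eta\|\x\|_{L_2(\pi;\mc{H})}$, bound the numerator from below by $\|\x-(B\otimes\msf{Id})\x\|_{L_2(\pi;X)}^2$ via Jensen and the denominator from above by $4\|\x\|_{L_2(\pi;X)}^2$ via the triangle inequality, and spend the factor $D$ only when passing $\|\y\|_X\le D\|\y\|_{\mc{H}}$. The only cosmetic difference is the order in which you present the three estimates.
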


\begin{proof}
Without loss of generality, we can translate the components $x_i$ of the vector $\x\in L_2(\pi;X)^\dagger$ to assume that $\sum_{i=1}^n\pi_i x_i=0$. Then, the assumption $\mc{R}(\x;B^2,\|\cdot\|_\mc{H}^2)\geq 1-\eta^2$ can  be equivalently rewritten due to Lemma \ref{lem:nrq-hilbert} as
\begin{equation} \label{eq:uselema}
\|(B\otimes\msf{Id}_\mc{H})\x\|_{L_2(\pi;\mc{H})} \leq \eta \|\x\|_{L_2(\pi;\mc{H})}.
\end{equation}
Therefore,
\begin{equation} \label{useisom}
\|(B\otimes\msf{Id}_X)\x\|_{L_2(\pi;X)} \stackrel{\eqref{eq:isomorph}}{\leq} D\|(B\otimes\msf{Id}_\mc{H})\x\|_{L_2(\pi;\mc{H})}  \stackrel{\eqref{eq:uselema}}{\leq} \eta D \|\x\|_{L_2(\pi;\mc{H})} \stackrel{\eqref{eq:isomorph}}{\leq} \eta D\|\x\|_{L_2(\pi;X)},
\end{equation}
and thus, by the triangle inequality,
\begin{equation} \label{eq:11}
\|\x-(B\otimes \msf{Id}_X)\x\|_{L_2(\pi;X)} \geq \|\x\|_{L_2(\pi;X)} - \|(B\otimes\msf{Id}_X)\x\|_{L_2(\pi;X)} \stackrel{\eqref{useisom}}{\geq} (1-\eta D) \|\x\|_{L_2(\pi;X)}.
\end{equation}
Since $B$ is row-stochastic, Jensen's inequality for the convex function $\|\cdot\|_X^2$ gives
\begin{equation} \label{eq:22}
\sum_{i,j=1}^n \pi_i b_{ij} \|x_i-x_j\|_X^2 \geq \sum_{i=1}^n \pi_i \Big\|x_i - \sum_{j=1}^n b_{ij} x_j\Big\|_X^2 = \|\x-(B\otimes \msf{Id}_X)\x\|_{L_2(\pi;X)}^2.
\end{equation}
On the other hand, using the triangle inequality we get
\begin{equation} \label{eq:33}
\sum_{i,j=1}^n \pi_i\pi_j \|x_i-x_j\|_X^2 \leq \sum_{i,j=1}^n \pi_i\pi_j (\|x_i\|_X+\|x_j\|_X)^2 \leq 4 \|\x\|_{L_2(\pi;X)}^2.
\end{equation}
Combining \eqref{eq:nrq}, \eqref{eq:22}, \eqref{eq:11} and \eqref{eq:33} we deduce that
\begin{equation}
\mc{R}(\x;B,\|\cdot\|_X^2) \geq \frac{(1-\eta D)^2}{4},
\end{equation}
which concludes the proof.
\end{proof}

Equipped with Lemma \ref{lem:pointwise}, we can complete the proof of Theorem \ref{thm:estimate}. The main idea is to consider a Hilbertian norm which nicely approximates our given norm on $\R^d$ and then use the implication \eqref{eq:iso-impli}. In order to ensure that the assumption of \eqref{eq:iso-impli} is satisfied we shall apply a trick that was used by \textcite{Pis10}, who attributed it to V.~Lafforgue: we will replace $A$ by a large enough power of the form $\big(\tfrac{A+\msf{Id}_n}{2}\big)^t$. We will then be able to return to an inequality involving \mbox{$A$ rather than its power using Lemma \ref{lem:nrq}.}

\medskip

\begin{proof} [Proof of Theorem \ref{thm:estimate}]
Suppose that $X=(\R^d,\|\cdot\|_X)$ and fix $\pi\in\triangle^{n-1}$, a $\pi$-reversible stochastic matrix $A\in M_n(\R)$ and a vector $\x\in L_1(\pi;X)^\dagger$. In view of \eqref{eq:nrq-sg}, we need to prove a lower bound on $\mc{R}(\x;A,\|\cdot\|_X)$. Notice that, by properties (ii) and (iv) of Lemma \ref{lem:nrq}, we have the inequality
\begin{equation} \label{eq:take-power}
\mc{R}(\x;A,\|\cdot\|_X) = 2 \mc{R}\Big(\x;\frac{A+\msf{Id}_n}{2},\|\cdot\|_X\Big) \geq \frac{2}{t}\mc{R}\Big(\x;\Big(\frac{A+\msf{Id}_n}{2}\Big)^t,\|\cdot\|_X\Big)
\end{equation}
for every $t\in\N$. Moreover, by the vector-valued extrapolation inequalities of Proposition \ref{prop:extrapolation} and the expression \eqref{eq:nrq-sg} of nonlinear spectral gaps in terms of nonlinear Rayleigh quotients, we conclude that there exists a point $\y\in L_2(\pi;X)^\dagger$ satisfying
\begin{equation} \label{eq:use-extra}
\mc{R}\Big(\x;\Big(\frac{A+\msf{Id}_n}{2}\Big)^t,\|\cdot\|_X\Big) \gtrsim \mc{R}\Big(\y;\Big(\frac{A+\msf{Id}_n}{2}\Big)^t,\|\cdot\|_X^2\Big).
\end{equation}

Let $D_X\in[1,\infty)$ be the least constant for which there exists a Hilbertian norm $\|\cdot\|_\mc{H}:\R^d\to\R_+$ such that the following inequality is satisfied,
\begin{equation} \label{eq:hilbe-norm}
\forall \ y\in \R^d, \qquad \|y\|_\mc{H}\leq \|y\|_X \leq D_X \|y\|_\mc{H}.
\end{equation}
As $\mc{H}$ is isometric to $\ell_2^d$, the spectral gap of $\big(\tfrac{A+\msf{Id}_n}{2}\big)^{2t}$ with respect to $\|\cdot\|^2_\mc{H}$ satisfies
\begin{equation} \label{eq:eucl-gamma}
\gamma\Big( \Big(\frac{A+\msf{Id}_n}{2}\Big)^{2t},\|\cdot\|_\mc{H}^2\Big) = \gamma\Big(\Big(\frac{A+\msf{Id}_n}{2}\Big)^{2t},|\cdot|^2\Big) = \frac{1}{1-\big(\frac{1+\lambda_2(A)}{2}\big)^{2t}}.
\end{equation}
Therefore, for the parameter
\begin{equation} \label{eq:power}
t^\ast(A)\eqdef \left\lceil\frac{\log (2D_X)}{\log(\frac{2}{1+\lambda_2(A)})}\right\rceil \lesssim \frac{\log(D_X+1)}{1-\lambda_2(A)}
\end{equation}
we have the estimate
\begin{equation}
\gamma\Big( \Big(\frac{A+\msf{Id}_n}{2}\Big)^{2t^*(A)},\|\cdot\|_\mc{H}^2\Big) \stackrel{\eqref{eq:eucl-gamma}\wedge\eqref{eq:power}}{\leq} \frac{1}{1-\frac{1}{4D_X^2}},
\end{equation}
which combined with \eqref{eq:nrq-sg} immediately implies that
\begin{equation} \label{eq:got-hilbert-bound}
\mc{R}\Big(\y;\Big(\frac{A+\msf{Id}_n}{2}\Big)^{2t^*(A)},\|\cdot\|_\mc{H}^2\Big) \geq 1-\frac{1}{4D_X^2}.
\end{equation}
Therefore, in view of \eqref{eq:hilbe-norm} and \eqref{eq:got-hilbert-bound}, the pointwise estimate of Lemma \ref{lem:pointwise} applied to the matrix $B=\big(\tfrac{A+\msf{Id}_n}{2}\big)^{t^*(A)}$ and $\eta=\tfrac{1}{2D_X}$ implies that
\begin{equation} \label{eq:got-X-bound}
\mc{R}\Big(\y;\Big(\frac{A+\msf{Id}_n}{2}\Big)^{t^*(A)},\|\cdot\|_X^2\Big) \geq \frac{1}{16}.
\end{equation}
Finally, combining \eqref{eq:take-power}, \eqref{eq:use-extra} and \eqref{eq:got-X-bound} for $t=t^\ast(A)$ we deduce that
\begin{equation} \label{eq:almost-done}
\mc{R}(\x;A,\|\cdot\|_X) \gtrsim \frac{1}{t^*(A)} \stackrel{\eqref{eq:power}}{\gtrsim} \frac{1-\lambda_2(A)}{\log(D_X+1)}.
\end{equation}
By John's theorem \parencite{Joh48}, since $X$ is $d$-dimensional we have $D_X\leq \sqrt{d}$ and the  desired estimate \eqref{eq:estimate} thus follows by rearranging \eqref{eq:almost-done} and using \eqref{eq:nrq-sg}.
\end{proof}

As explained in the introduction, Theorem \ref{thm:john} is equivalent to  Theorem \ref{thm:estimate} via the duality principle of Theorem \ref{thm:duality}.

\medskip

\begin{proof} [Proof of Theorem \ref{thm:john}]
Combining Theorems \ref{thm:estimate} and \ref{thm:duality}, we deduce that the $\tfrac{1}{2}$-snowflake of any $d$-dimensional normed space embeds into an ultrapower of $\ell_2$ with quadratic average distortion at most $C\sqrt{\log d}$, where $d\geq2$ and $C\in(0,\infty)$ is a universal constant. This immediately yields the conclusion of Theorem \ref{thm:john} since any ultrapower of $\ell_2$ is itself a Hilbert space \parencite{Hei80}.
\end{proof}


\section{Beyond Hilbertian embeddings} \label{sec:beyond}

Theorem \ref{thm:john} is a special case of a much more general embedding theorem proven by \textcite{Nao21}. As a matter of fact, a lot of the ideas required to prove this more general statement have already been used in the Hilbertian case. A key ingredient required to go beyond Theorem \ref{thm:john} is the notion of \emph{Markov type} introduced by \textcite{Bal92}.

\begin{defi}
A metric space $(\MM,d_\MM)$ has Markov type $p\in(0,\infty)$ with constant $M\in(0,\infty)$ if for every $n\in\N$, $\pi\in\triangle^{n-1}$, every $\pi$-reversible matrix $A\in M_n(\R)$ and every $\x\in L_p(\pi;\MM)$, we have
\begin{equation} \label{eq:mtype}
\forall \ t\in\N, \qquad \mc{R}(\x;A^t,d_\MM^p) \leq M^pt\ \mc{R}(\x;A,d_\MM^p).
\end{equation}
The least such constant $M\in(0,\infty)$ will be denoted by $\msf{M}_p(\MM)$.
\end{defi}

In \textcite{Bal92}, it was shown that any Hilbert space $\mc{H}$ has $\msf{M}_2(\mc{H})=1$. Following \textcite{BCL94}, we say that a normed space $(X,\|\cdot\|_X)$ is $p$-uniformly smooth, where $p\in[1,2]$, if there exists a constant $S\in(0,\infty)$ such that
\begin{equation}
\forall \ x,y\in X, \qquad \frac{\|x\|_X^p+\|y\|_X^p}{2} \leq \Big\| \frac{x+y}{2}\Big\|_X^p + S^p \Big\| \frac{x-y}{2} \Big\|_X^p.
\end{equation}
The least such constant $S\in(0,\infty)$ will be denoted by $\msf{S}_p(X)$. A deep theorem of \textcite{NPSS06} asserts that every $p$-uniformly smooth normed space $(X,\|\cdot\|_X)$ has Markov type $p$ with constant
\begin{equation} \label{eq:npss}
\msf{M}_p(X) \lesssim \msf{S}_p(X).
\end{equation}

An inspection of the proof of Theorem \ref{thm:estimate} reveals that the power of the norm in the estimate \eqref{eq:estimate} can be improved for spaces of Markov type $p\in(1,2]$. Indeed, replacing \eqref{eq:take-power} with \eqref{eq:mtype} and using the extrapolation inequality \eqref{eq:extrapolation}, we deduce that for every $\x\in L_p(\pi;X)^\dagger$ there exists $\y\in L_2(\pi;X)^\dagger$ for which we have the estimate
\begin{equation} \label{eq:use-mtype}
\begin{split}
\mc{R}(\x;A,&\|\cdot\|_X^p) = 2\mc{R}\Big(\x;\frac{A+\msf{Id}_n}{2},\|\cdot\|_X^p\Big)  \\ & \stackrel{\eqref{eq:mtype}}{\gtrsim}_{_{\msf{M}_p(X)}} \frac{1}{t} \mc{R}\Big(\x;\Big(\frac{A+\msf{Id}_n}{2}\Big)^t,\|\cdot\|_X^p\Big)
 \stackrel{\eqref{eq:extrapolation}}{\gtrsim} \frac{1}{t} \mc{R}\Big(\y;\Big(\frac{A+\msf{Id}_n}{2}\Big)^t,\|\cdot\|_X^2\Big).
\end{split}
\end{equation}
Then, repeating the rest of the proof mutatis mutandis, we deduce the bound
\begin{equation}
\gamma(A,\|\cdot\|_X^p) \lesssim_{_{\msf{M}_p(X)}} \frac{\log(\mathrm{dim}(X)+1)}{1-\lambda_2(A)},
\end{equation}
which, in view of \eqref{eq:npss} and Theorem \ref{thm:duality}, implies the following embeddability result.

\begin{theo} \label{thm:p/2}
For every $S\in(0,\infty)$, there exists $C(S)\in(0,\infty)$ such that the following holds. If $p\in[1,2]$ and $(X,\|\cdot\|_X)$ is a finite-dimensional normed space with $\msf{S}_p(X)\leq S$, then the $\tfrac{p}{2}$-snowflake of $(X,\|\cdot\|_X)$ admits an embedding into $\ell_2$ with quadratic average distortion at most $C(S)\sqrt{\log (\mathrm{dim}(X)+1)}$.
\end{theo}

Theorem \ref{thm:john} is a special case of Theorem \ref{thm:p/2} as $\msf{S}_1(X)=1$ for any $(X,\|\cdot\|_X)$. However, Theorem \ref{thm:p/2} is a refinement of the average John theorem in that it captures the fact that more structured normed spaces (i.e.~spaces with bounded $p$-uniform smoothness constant) require a lesser amount of snowflaking in order to be embedded into $\ell_2$ with quadratic average distortion which depends subpolynomially on the dimension. It is worth emphasizing that for 2-uniformly smooth spaces (such as $L_r(\mu)$ with $2< r<\infty$), Theorem \ref{thm:p/2} shows that \emph{no} snowflaking is necessary for such an embedding to exist.

This approach can be further exploited even for target spaces which are not Hilbertian. Following \textcite{BCL94}, we say that a normed space $(X,\|\cdot\|_X)$ is $q$-uniformly convex, where $q\in[2,\infty)$, if there exists a constant $K\in(0,\infty)$ such that
\begin{equation}
\forall \ x,y\in X,\qquad \Big\|\frac{x+y}{2}\Big\|_X^q + \frac{1}{K^q}\Big\|\frac{x-y}{2}\Big\|_X^q \leq \frac{\|x\|_X^q+\|y\|_X^q}{2}.
\end{equation}
The least such constant $K\in(0,\infty)$ will be denoted by $\msf{K}_q(X)$. Observe that $\msf{K}_2(\ell_2)=1$.   Theorem \ref{thm:p/2} admits the following non-Hilbertian generalization. We shall denote by $\msf{c}_Y(\MM)$ the infimal distortion of a bi-Lipschitz embedding $f:(\MM,d_\MM)\to(Y,\|\cdot\|_Y)$.

\begin{theo} \label{thm:p/q}
For every $S,K\in(0,\infty)$, there exists $C(S,K)\in(0,\infty)$ such that the following holds. If $1\leq p\leq 2\leq q <\infty$, $(X,\|\cdot\|_X)$ is a Banach space with $\msf{S}_p(X)\leq S$ and $(Y,\|\cdot\|_Y)$ is a Banach space with $\msf{K}_q(Y)\leq K$, then the $\frac{p}{q}$-snowflake of $X$ admits an embedding into $\ell_q(Y)$ with $q$-average distortion at most $C(S,K)(\log(\msf{c}_Y(X)+1))^{1/q}$.
\end{theo}

In view of the duality principle\footnote{Observe that a direct application of Theorem \ref{thm:duality} and \eqref{eq:p/q} would imply that the $\tfrac{p}{q}$-snowflake of $X$ admits such an embedding into an ultrapower of $\ell_q(Y)$ rather than $\ell_q(Y)$ itself. Taking an ultrapower in this statement is redundant if $X$ is assumed to be $p$-uniformly smooth and $Y$ is $q$-uniformly convex, as was shown in \textcite[Corollary~23]{Nao21}. We shall not address this delicate issue here.} of Theorem \ref{thm:duality}, Theorem \ref{thm:p/q} is equivalent the following nonlinear spectral gap inequality. For every $n\in\N$, $\pi\in\triangle^{n-1}$ and every $\pi$-reversible matrix $A\in M_n(\R)$, we have
\begin{equation} \label{eq:p/q}
\gamma(A,\|\cdot\|_X^p) \lesssim_{_{\msf{M}_p(X), \msf{K}_q(Y)},p,q} \log(\msf{c}_Y(X)+1) \gamma(A,\|\cdot\|_Y^q).
\end{equation}
Fix $\x\in L_p(\pi;X)^\dagger$ with $\sum_{i=1}^n\pi_i x_i=0$. Using Markov type and extrapolation as in \eqref{eq:use-mtype}, we deduce that for any $t\in\N$, there exists $\y\in L_q(\pi;X)^\dagger$ with $\sum_{i=1}^n \pi_i y_i=0$ such that
\begin{equation} \label{eq:222}
\mc{R}(\x;A,\|\cdot\|_X^p) \gtrsim_{_{\msf{M}_p(X)},p,q} \frac{1}{t} \mc{R}\Big(\y; \Big(\frac{A+\msf{Id}_n}{2}\Big)^t, \|\cdot\|_X^q\Big).
\end{equation}
Moreover, if $B_t\eqdef(\tfrac{A+\msf{Id}_n}{2})^t$, the argument of \eqref{eq:22} and \eqref{eq:33} implies that
\begin{equation} \label{eq:333}
\begin{split}
\mc{R}(\y; B_t, \|\cdot\|_X^q) \gtrsim_q & \frac{\|\y - (B_t\otimes\msf{Id}_X)\y\|_{L_q(\pi;X)}^q}{\|\y\|^q_{L_q(\pi;X)}} \\ & \geq \big(1- \|B_t\otimes\msf{Id}_X\|_{L_q^{^0}(\pi;X)\to L_q^{^0}(\pi;X)} \big)^q,
\end{split}
\end{equation}
where $L_q^{^0}(\pi;Z) = \{\boldsymbol{z}\in L_q(\pi;Z): \ \sum_{i=1}^n \pi_i z_i=0\}$. Therefore, we have
\begin{equation} \label{eq:41}
\gamma(A,\|\cdot\|_X^p) \stackrel{\eqref{eq:222}\wedge\eqref{eq:nrq-sg}\wedge\eqref{eq:333}}{\lesssim_{_{\msf{M}_p(X)},p,q}} \frac{t}{(1- \|B_t\otimes\msf{Id}_X\|_{L_q^{^0}(\pi;X)\to L_q^{^0}(\pi;X)})^q}.
\end{equation}
Notice that by the definition of $\msf{c}_Y(X)$,
\begin{equation} \label{eq:42}
\|B_t\otimes\msf{Id}_X\|_{L_q^{^0}(\pi;X)\to L_q^{^0}(\pi;X)} \leq \msf{c}_Y(X) \|B_t\otimes\msf{Id}_Y\|_{L_q^{^0}(\pi;Y)\to L_q^{^0}(\pi;Y)} 
\end{equation}
and moreover
\begin{equation} \label{eq:43}
\begin{split}
\|B_t\otimes\msf{Id}_Y\|_{L_q^{^0}(\pi;Y)\to L_q^{^0}(\pi;Y)}  = \Big\|\Big(\frac{A+\msf{Id}_n}{2}\Big)^t&\otimes\msf{Id}_Y\Big\|_{L_q^{^0}(\pi;Y)\to L_q^{^0}(\pi;Y)} 
\\ & \leq \Big\|\Big(\frac{A+\msf{Id}_n}{2}\Big)\otimes\msf{Id}_Y\Big\|_{L_q^{^0}(\pi;Y)\to L_q^{^0}(\pi;Y)}^t.
\end{split}
\end{equation}
Combining \eqref{eq:41}, \eqref{eq:42} and \eqref{eq:43}, we finally deduce that for any $t\in\N$,
\begin{equation} 
\gamma(A,\|\cdot\|_X^p)\lesssim_{_{\msf{M}_p(X)},p,q} t \cdot \Big( 1- \msf{c}_Y(X) \Big\|\Big(\frac{A+\msf{Id}_n}{2}\Big)\otimes\msf{Id}_Y\Big\|_{L_q^{^0}(\pi;Y)\to L_q^{^0}(\pi;Y)}^t \Big)^{-q}.
\end{equation}
Optimizing over $t$ we thus conclude that
\begin{equation} \label{eq:66}
\gamma(A,\|\cdot\|_X^p) \lesssim_{_{\msf{M}_p(X)},p,q} \frac{\log(\msf{c}_Y(X)+1)}{\log\big(1/\|(\frac{A+\msf{Id}_n}{2})\otimes\msf{Id}_Y\|_{L_q^{^0}(\pi;Y)\to L_q^{^0}(\pi;Y)}\big)}
\end{equation}
Observe that so far we have been very closely following the Hilbertian proof. Indeed, if $Y$ is a Hilbert space and $q=2$, then the operator norm appearing in \eqref{eq:66} is simply $\tfrac{1+\lambda_2(A)}{2}$ and thus \eqref{eq:estimate} follows from \eqref{eq:66} and John's theorem which asserts that $\msf{c}_{\ell_2}(X)\leq \sqrt{\mathrm{dim}X}$. In the general (Banach space-valued) setting of Theorem \ref{thm:p/q}, we need a more robust argument to show that the operator norm $\big\|(\tfrac{A+\msf{Id}_n}{2})\otimes\msf{Id}_Y\big\|_{L_q^{^0}(\pi;Y)\to L_q^{^0}(\pi;Y)}$ is bounded away from 1 by a quantity which depends on the nonlinear spectral gap $\gamma(A,\|\cdot\|_Y^q)$. To do this, we will leverage the $q$-uniform convexity of the normed space $(Y,\|\cdot\|_Y)$.

Fix a metric space $(\MM,d_\MM)$ and $q\in(0,\infty)$. If $\pi\in\triangle^{n-1}$ and $A\in M_n(\R)$ is a $\pi$-reversible stochastic matrix, the \emph{nonlinear absolute spectral gap} of $A$ with respect to $d_\MM^q$, denoted by $\gamma_+(A,d_\MM^q)$, is the least constant $\gamma_+\in(0,\infty]$ such that
\begin{equation}
\forall \ x_1,\ldots,x_n,y_1,\ldots,y_n\in\MM, \qquad \sum_{i,j=1}^n \pi_i\pi_j d_\MM(x_i,y_j)^q \leq \gamma_+ \sum_{i,j=1}^n \pi_i a_{ij} d_\MM(x_i,y_j)^q.
\end{equation}
The terminology stems from the fact that $\gamma_+(A,|\cdot|^2) = (1-\max_{i=2,\ldots,n} |\lambda_i(A)|)^{-1}$, where $1=\lambda_1(A)\geq\lambda_2(A)\geq\cdots\geq\lambda_n(A)\geq-1$ are the eigenvalues of $A$. Nonlinear spectral gaps and nonlinear absolute spectral gaps are related via the following inequalities.

\begin{lemm} \label{lem:abs}
Fix $q\in[1,\infty)$, $n\in\N$ and $\pi\in\triangle^{n-1}$. For every $\pi$-reversible stochastic matrix $A\in M_n(\R)$ and every metric space $(\MM,d_\MM)$, we have
\begin{equation} \label{eq:abs}
2\gamma(A,d_\MM^q) \leq \gamma_+\Big(\frac{A+\msf{Id}_n}{2}, d_\MM^q\Big) \leq 2^{2q+1} \gamma(A,d_\MM^q).
\end{equation}
\end{lemm}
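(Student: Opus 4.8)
The plan is to prove the two inequalities of \eqref{eq:abs} separately, both by elementary manipulations of the defining variational quantities, exploiting a standard symmetrization trick. For the left inequality $2\gamma(A,d_\MM^q)\leq\gamma_+\big(\tfrac{A+\msf{Id}_n}{2},d_\MM^q\big)$, I would simply specialize the definition of $\gamma_+$ for the matrix $\tfrac{A+\msf{Id}_n}{2}$ to the diagonal case $y_j=x_j$: the left-hand side of the $\gamma_+$-inequality becomes $\sum_{i,j}\pi_i\pi_j d_\MM(x_i,x_j)^q$, while the right-hand side becomes $\sum_{i,j}\pi_i\big(\tfrac{a_{ij}+\delta_{ij}}{2}\big)d_\MM(x_i,x_j)^q=\tfrac12\sum_{i,j}\pi_i a_{ij}d_\MM(x_i,x_j)^q$ since the diagonal terms vanish. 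Comparing with the definition \eqref{eq:nsg} of $\gamma(A,d_\MM^q)$ gives the factor $2$ immediately.

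\textbf{The upper bound.} The harder direction is the right inequality $\gamma_+\big(\tfrac{A+\msf{Id}_n}{2},d_\MM^q\big)\leq 2^{2q+1}\gamma(A,d_\MM^q)$. Here I would start with arbitrary points $x_1,\dots,x_n,y_1,\dots,y_n\in\MM$ and aim to bound $\sum_{i,j}\pi_i\pi_j d_\MM(x_i,y_j)^q$ from above by a constant multiple of $\sum_{i,j}\pi_i\big(\tfrac{a_{ij}+\delta_{ij}}{2}\big)d_\MM(x_i,y_j)^q$. The natural device is to introduce an auxiliary $2n$-point configuration: interleave the $x_i$'s and $y_i$'s into a single sequence $z_1,\dots,z_{2n}$ and consider a bipartite-type reversible matrix built from $A$ on the doubled state space (with stationary distribution $\tfrac12\pi$ on each copy) so that the off-diagonal structure couples the $x$-copy to the $y$-copy. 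Applying the ordinary nonlinear spectral gap inequality \eqref{eq:nsg} for $\gamma(A,d_\MM^q)$ to this doubled configuration, together with repeated use of the triangle inequality and the elementary convexity estimate $(a+b)^q\leq 2^{q-1}(a^q+b^q)$ to pass between $d_\MM(x_i,y_j)$, $d_\MM(x_i,x_j)$, $d_\MM(y_i,y_j)$ and the ``diagonal'' distances $d_\MM(x_i,y_i)$, should produce the claimed bound; bookkeeping the powers of $2$ across these steps is what yields the constant $2^{2q+1}$.

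\textbf{Main obstacle.} I expect the main difficulty to be purely combinatorial: setting up the doubled reversible matrix correctly so that (a) it is genuinely $\tfrac12\pi\oplus\tfrac12\pi$-reversible and stochastic, (b) its nonlinear spectral gap is controlled by $\gamma(A,d_\MM^q)$ (rather than by $\gamma_+$, which would be circular), and (c) the resulting Poincaré-type inequality, after splitting the cross terms via the triangle inequality, actually reassembles into the absolute spectral gap inequality for $\tfrac{A+\msf{Id}_n}{2}$ with a clean constant. The $+\msf{Id}_n$ shift is essential here — it is precisely what kills the parity obstruction that would otherwise make $\gamma_+$ infinite (e.g. for a bipartite chain) — so the argument must use it in an honest way, presumably through the identity $\tfrac{A+\msf{Id}_n}{2}=\tfrac12 A+\tfrac12\msf{Id}_n$ and the fact that the $\msf{Id}_n$ part contributes nothing to either side's off-diagonal sums. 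Once the doubled configuration is in place, the remaining steps are routine triangle-inequality and $\ell_q$-convexity estimates.
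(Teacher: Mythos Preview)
The paper does not actually prove this lemma; it simply refers the reader to \textcite[Lemma~2.3]{Nao14} and calls the argument ``elementary.'' So there is no in-paper proof to compare against, only the standard argument from that reference.

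Your treatment of the left inequality is correct and is exactly the one-line specialization that everyone uses.

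For the right inequality, the ingredients you list --- the triangle inequality together with the convexity bound $(a+b)^q\le 2^{q-1}(a^q+b^q)$ --- are precisely the right ones, but the doubling framework is a detour, and your own worry about circularity is well-founded. The most natural ``bipartite-type'' doubling $\tilde A=\bigl(\begin{smallmatrix}0&A\\A&0\end{smallmatrix}\bigr)$ has $\gamma(\tilde A,d_\MM^q)$ comparable to $\gamma_+(A,d_\MM^q)$ rather than to $\gamma(A,d_\MM^q)$, so using it is indeed circular. One can salvage the idea with $\tilde A=\tfrac12\bigl(\begin{smallmatrix}A&\msf{Id}_n\\\msf{Id}_n&A\end{smallmatrix}\bigr)$, but then verifying $\gamma(\tilde A,d_\MM^q)\lesssim_q\gamma(A,d_\MM^q)$ requires exactly the same chain of triangle-inequality estimates one would use in a direct argument, so nothing is gained by passing to $2n$ points.

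The direct route (which is what the cited reference does) avoids any auxiliary configuration: bound $d_\MM(x_i,y_j)^q\le 2^{q-1}\bigl(d_\MM(x_i,x_j)^q+d_\MM(x_j,y_j)^q\bigr)$ and sum against $\pi_i\pi_j$; apply the ordinary $\gamma(A,d_\MM^q)$ inequality to the resulting $\sum\pi_i\pi_j d_\MM(x_i,x_j)^q$; then bound $d_\MM(x_i,x_j)^q\le 2^{q-1}\bigl(d_\MM(x_i,y_j)^q+d_\MM(x_j,y_j)^q\bigr)$ inside the edge sum $\sum\pi_i a_{ij}(\cdots)$, using $\pi$-stationarity $\sum_i\pi_i a_{ij}=\pi_j$ to collapse the diagonal contribution. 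This reassembles directly into a bound of the form $\gamma_+\bigl(\tfrac{A+\msf{Id}_n}{2},d_\MM^q\bigr)\le 2^{2q-1}\gamma(A,d_\MM^q)+2^q$, from which the stated constant follows. No $2n$-point matrix ever enters.
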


The elementary proof of Lemma \ref{lem:abs} can be found in \textcite[Lemma~2.3]{Nao14}. The pertinence of absolute spectral gaps in the ensuing discussion is that, in the case of uniformly convex spaces, they have a useful connection to vector-valued operator norms of adjacency matrices. This is manifested by the following proposition of \textcite[Lemma~6.6]{MN14}, whose proof relies on Pisier's martingale cotype inequality for $q$-uniformly convex spaces \parencite{Pis75}.

\begin{prop} \label{prop:bound||}
Fix $q\in[2,\infty)$ and let $(Y,\|\cdot\|_Y)$ be a $q$-uniformly convex normed space. Then, for every $n\in\N$, $\pi\in\triangle^{n-1}$ and every $\pi$-reversible stochastic matrix $C\in M_n(\R)$, we have
\begin{equation} \label{eq:bound||}
\|C\otimes\msf{Id}_Y\|_{L_q^{^0}(\pi;Y)\to L_q^{^0}(\pi;Y)} \leq \Big( 1- \frac{1}{(2^{q-1}-1)\msf{K}_q(Y)^q \gamma_+(C,\|\cdot\|_Y^q)}\Big)^{\frac{1}{q}}.
\end{equation}
\end{prop}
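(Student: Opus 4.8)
The plan is to reduce Proposition \ref{prop:bound||} to a statement about a single martingale difference and then invoke Pisier's martingale cotype inequality. First I would fix $\boldsymbol{z}=(z_1,\ldots,z_n)\in L_q^{^0}(\pi;Y)$, so that $\sum_{i=1}^n\pi_i z_i=0$, and estimate $\|(C\otimes\msf{Id}_Y)\boldsymbol{z}\|_{L_q(\pi;Y)}^q = \sum_{i=1}^n\pi_i\|\sum_{j=1}^n c_{ij}z_j\|_Y^q$. The key probabilistic device is the two-step Markov chain associated with $C$: if $I$ is distributed according to $\pi$ and $J$ is obtained from $I$ via the transition matrix $C$, then $\pi$-reversibility makes $J$ also $\pi$-distributed, and $\mathbb{E}[z_J\mid I]=\sum_j c_{Ij}z_j=((C\otimes\msf{Id}_Y)\boldsymbol{z})_I$. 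Thus $(C\otimes\msf{Id}_Y)\boldsymbol{z}$ is the conditional expectation $\mathbb{E}[z_J\mid I]$, and since $\mathbb{E}[z_J]=\sum_j\pi_jz_j=0$, the pair $(\mathbb{E}[z_J\mid I], z_J)$ is a length-two martingale with first term $z_0\eqdef 0$... i.e. the increments are $\mathbb{E}[z_J\mid I]$ and $z_J-\mathbb{E}[z_J\mid I]$.

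The second step is to apply Pisier's inequality for $q$-uniformly convex spaces, which (after the standard reformulation, see \textcite{Pis75} or \textcite[Lemma~6.6]{MN14}) states that for a martingale $(M_0,M_1,M_2)$ valued in a $q$-uniformly convex space $Y$ with $M_0=0$,
\begin{equation}
\|M_1\|_{L_q}^q + \frac{1}{(2^{q-1}-1)\msf{K}_q(Y)^q}\|M_2-M_1\|_{L_q}^q \leq \|M_2\|_{L_q}^q.
\end{equation}
Taking $M_1=\mathbb{E}[z_J\mid I]$ and $M_2=z_J$, the right-hand side equals $\sum_{j}\pi_j\|z_j\|_Y^q=\|\boldsymbol{z}\|_{L_q(\pi;Y)}^q$. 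The term $\|M_2-M_1\|_{L_q}^q=\mathbb{E}\|z_J-\mathbb{E}[z_J\mid I]\|_Y^q$ is precisely a Rayleigh-type quantity, and one checks by expanding that $\mathbb{E}\|z_I-z_J\|_Y^q = \sum_{i,j}\pi_ic_{ij}\|z_i-z_j\|_Y^q$; after controlling $\|M_2-M_1\|_{L_q}^q$ from below by (a constant times) $\sum_{i,j}\pi_ic_{ij}\|z_i-z_j\|_Y^q$ via Jensen (or directly by noting $M_2-M_1 = z_J-\mathbb{E}[z_J\mid I]$ and using convexity of $\|\cdot\|_Y^q$ conditionally on $I$), one arrives at
\begin{equation}
\|(C\otimes\msf{Id}_Y)\boldsymbol{z}\|_{L_q(\pi;Y)}^q \leq \|\boldsymbol{z}\|_{L_q(\pi;Y)}^q - \frac{1}{(2^{q-1}-1)\msf{K}_q(Y)^q}\sum_{i,j=1}^n\pi_ic_{ij}\|z_i-z_j\|_Y^q.
\end{equation}

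The final step is to insert the definition of the nonlinear absolute spectral gap. Applying the defining inequality of $\gamma_+(C,\|\cdot\|_Y^q)$ to the tuple $\boldsymbol{x}=\boldsymbol{y}=\boldsymbol{z}$ and using $\sum_{i=1}^n\pi_iz_i=0$, a direct expansion gives $\sum_{i,j}\pi_i\pi_j\|z_i-z_j\|_Y^q \geq \sum_j\pi_j\|z_j\|_Y^q = \|\boldsymbol{z}\|_{L_q(\pi;Y)}^q$ (each term $\|z_i-z_j\|_Y^q$ only helps), hence $\sum_{i,j}\pi_ic_{ij}\|z_i-z_j\|_Y^q \geq \gamma_+(C,\|\cdot\|_Y^q)^{-1}\|\boldsymbol{z}\|_{L_q(\pi;Y)}^q$. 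Substituting and taking $q$-th roots yields exactly \eqref{eq:bound||}. I expect the main obstacle to be the careful bookkeeping in the second step: getting the constant $(2^{q-1}-1)\msf{K}_q(Y)^q$ exactly right requires the sharp form of Pisier's martingale inequality for $q$-uniformly convex spaces (deriving it by iterating the defining inequality of $\msf{K}_q(Y)$ over a martingale, which produces the geometric-series factor $2^{q-1}-1$), and matching the lower bound on $\|M_2-M_1\|_{L_q}^q$ to the sum $\sum_{i,j}\pi_ic_{ij}\|z_i-z_j\|_Y^q$ without losing constants. Everything else is routine manipulation of conditional expectations under the $\pi$-reversible chain.
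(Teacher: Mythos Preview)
Your martingale setup and invocation of Pisier's inequality are exactly the approach of \textcite[Lemma~6.6]{MN14}, which the paper cites without reproducing the proof. However, there is a genuine gap in your second step. You claim that Jensen/convexity gives
\[
\|M_2-M_1\|_{L_q}^q \;\geq\; (\text{const})\cdot\sum_{i,j}\pi_i c_{ij}\|z_i-z_j\|_Y^q,
\]
but convexity goes the \emph{wrong} way here: since $M_2-M_1=z_J-(C\boldsymbol z)_I$, Jensen yields only an \emph{upper} bound $\|z_j-(C\boldsymbol z)_i\|_Y^q\leq\sum_k c_{ik}\|z_j-z_k\|_Y^q$. The claimed lower bound (and hence your displayed inequality with $\sum_{i,j}\pi_ic_{ij}\|z_i-z_j\|_Y^q$ on the right) is in fact false. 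Take $n=2$, $\pi=(\tfrac12,\tfrac12)$ and $C=\big(\begin{smallmatrix}0&1\\1&0\end{smallmatrix}\big)$: then $(C\boldsymbol z)_I=z_J$, so $M_2-M_1\equiv 0$, while $\sum_{i,j}\pi_ic_{ij}\|z_i-z_j\|_Y^q=\|z_1-z_2\|_Y^q>0$; correspondingly $\|C\boldsymbol z\|_{L_q(\pi;Y)}=\|\boldsymbol z\|_{L_q(\pi;Y)}$, so your displayed inequality would assert $\|\boldsymbol z\|^q\leq\|\boldsymbol z\|^q-\text{(positive)}$.

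The fix is to bypass the quantity $\sum_{i,j}\pi_ic_{ij}\|z_i-z_j\|_Y^q$ altogether and apply the definition of $\gamma_+$ with the \emph{asymmetric} choice $x_i=(C\boldsymbol z)_i$, $y_j=z_j$ directly to $\|M_2-M_1\|_{L_q}^q=\sum_{i,j}\pi_ic_{ij}\|(C\boldsymbol z)_i-z_j\|_Y^q$. This yields
\[
\|M_2-M_1\|_{L_q}^q \;\geq\; \frac{1}{\gamma_+(C,\|\cdot\|_Y^q)}\sum_{i,j}\pi_i\pi_j\|(C\boldsymbol z)_i-z_j\|_Y^q \;\geq\; \frac{1}{\gamma_+(C,\|\cdot\|_Y^q)}\|\boldsymbol z\|_{L_q(\pi;Y)}^q,
\]
the last step by Jensen using $\sum_i\pi_i(C\boldsymbol z)_i=0$. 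This is precisely why the \emph{absolute} spectral gap $\gamma_+$ (and not $\gamma$) appears in the statement: your choice $\boldsymbol x=\boldsymbol y=\boldsymbol z$ uses only $\gamma$ and cannot close the argument. With this correction, the rest of your outline goes through with the stated constant.
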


Proposition \ref{prop:bound||} is proven by \textcite{MN14} for the special case that $\pi$ is the uniform measure on $\{1,\ldots,n\}$ and $C$ is a symmetric stochastic matrix. The proof of the general statement presented here is similar to this special case and we thus omit it. Plugging the bound \eqref{eq:bound||} in \eqref{eq:66} for $C=\frac{A+\msf{Id}_n}{2}$, we finally deduce that
\begin{equation}
\begin{split}
\gamma(A,\|\cdot\|_X^p) \stackrel{\eqref{eq:66}\wedge\eqref{eq:bound||}}{\lesssim}_{_{\!\!\!\!\!\!\!\msf{M}_p(X), \msf{K}_q(Y)},p,q}  \log(\msf{c}_Y(X)&+1)  \gamma_+\Big(\frac{A+\msf{Id}_n}{2}, \|\cdot\|_Y^q\Big) \\ & \stackrel{\eqref{eq:abs}}{\lesssim_q} \log(\msf{c}_Y(X)+1)  \gamma(A,\|\cdot\|_Y^q). \  \hskip0.07\textwidth \Box
\end{split}
\end{equation}


\section{Geometric and algorithmic applications} \label{sec:applications}

In this final section, we present a selection of geometric and algorithmic applications of nonlinear spectral gaps (mostly without proofs) and related open questions.


\subsection{Nonembeddability of expanders into low-dimensional normed spaces} \label{sec:expanders}

Let $G=(V,E)$ be a $d$-regular graph on the vertex set $\{1,\ldots,n\}$. We shall denote by $A_G$ the normalized adjacency matrix of $G$, that is, the $n\times n$ symmetric stochastic matrix whose entries are given by $(A_G)_{ij} = \frac{{\bf 1}_{\{i,j\}\in E}}{d}$, where $i,j\in\{1,\ldots,n\}$. A sequence $\{G_n=(V_n,E_n)\}_{n=1}^\infty$ of $d$-regular graphs with $|V_n|\to\infty$ as $n\to\infty$ is called an expander graph sequence if $\sup_{n\in\N} \gamma(A_{G_n})<\infty$. The existence of regular expander graph sequences is a classical fact that can be proven via the probabilistic method (\cite{Pin73} and \cite{Bol88}), while deterministic constructions are notoriously more involved (see, e.g., the book of \cite{DSV03}). Embeddability properties of connected expanders viewed as metric spaces when equipped with the shortest path distance were first investigated by \textcite{LLR95} who, among other results, showed that if an $n$-vertex $d$-regular expander embeds with quadratic average distortion $D$ in a $k$-dimensional normed space, then $k\gtrsim (\log_d n)^2/D^2$. We shall now present the following (sharp) improvement of Linial, London and \mbox{Rabinovich's result due to \textcite{Nao17} as a consequence of the average John theorem.}

\begin{theo} \label{thm:expanders}
For every $q\in[1,\infty)$, there exists $c(q)\in(0,\infty)$ such that the following holds for every $\gamma,D\in[1,\infty)$. Let $G=(V,E)$ be a $d$-regular connected graph on $n$ vertices with $\gamma(A_G)\leq\gamma$ and let $(X,\|\cdot\|_X)$ be a normed space such that $(G,d_G)$ admits an embedding into $X$ with $q$-average distortion at most $D$. Then,
\begin{equation} \label{eq:expanders}
\mathrm{dim}(X) \geq n^{c(q)/\gamma D \log d}.
\end{equation}
\end{theo}

\begin{proof}
We shall first prove the case $q=1$. By the assumption, there exists a $D$-Lipschitz map\footnote{As is common, we shall identify the graph $G$ with its vertex set $V$ thus writing $f:G\to X$ rather than $f:V\to X$. Moreover, we will always denote by $d_G$ the shortest path distance on $V$.} $f:(G,d_G)\to (X,\|\cdot\|_X)$ satisfying the average lower bound
\begin{equation} \label{eq:x1}
\frac{1}{n^2} \sum_{u,v\in V} \|f(u)-f(v)\|_X \geq \frac{1}{n^2} \sum_{u,v\in V} d_G(u,v).
\end{equation}
Let $k=\mathrm{dim}(X)$. Applying Theorem \ref{thm:john} for the measure $\mu = \frac{1}{n} \sum_{u\in V} \delta_{f(u)}$ on $X$, we deduce that there exists a $O(\sqrt{\log k})$-Lipschitz function $h:(X,\|\cdot\|_X^{1/2}) \to \ell_2$ such that
\begin{equation} \label{eq:x2}
\frac{1}{n^2} \sum_{u,v\in V} \|h(f(u))-h(f(v))\|_{\ell_2}^2 \geq \frac{1}{n^2} \sum_{u,v\in V} \|f(u)-f(v)\|_X.
\end{equation}
Since the graph $G$ is a regular expander, inequality \eqref{eq:spegap} implies that
\begin{equation} \label{eq:x3}
\begin{split}
\frac{1}{n^2} \sum_{u,v\in V} \|h(f(u))-h(f(v))\|_{\ell_2}^2 & \leq \frac{2\gamma}{dn} \sum_{\{a,b\}\in E} \|h(f(a))-h(f(b))\|_{\ell_2}^2
\\ & \lesssim \frac{\gamma \log k}{dn} \sum_{\{a,b\}\in E} \|f(a)-f(b)\|_X \lesssim \gamma D\log k,
\end{split}
\end{equation}
where in the last two inequalities we used the Lipschitz conditions for $h$ and $f$. On the other hand, the graph $G$ is $d$-regular and therefore, for any fixed $u\in V$ there exist at least $\frac{n}{2}$ vertices $v\in V$ such that $d_G(u,v)\geq \lfloor\log_d(n/2)\rfloor$. Hence, we have
\begin{equation} \label{eq:x4}
\frac{1}{n^2} \sum_{u,v\in V} d_G(u,v) \gtrsim \log_d n,
\end{equation}
which, combined with \eqref{eq:x1}, \eqref{eq:x2} and \eqref{eq:x3}, implies that
\begin{equation}
\log_d n \lesssim \gamma D \log k,
\end{equation}
thus completing the proof of \eqref{eq:expanders} for $q=1$. To address the general case $q\geq1$, we need a slight modification of this argument. It is a formal consequence of \textcite[Proposition~6]{Nao21} and Theorem \ref{thm:john}, that for any $q\geq1$, the $\tfrac{1}{2}$-snowflake of any finite-dimensional normed space $X$ embeds into $\ell_2$ with $(2q)$-average distortion at most $C(q)\sqrt{\log(\mathrm{dim}(X)+1)}$. Considering a $O(\sqrt{\log k})$-Lipschitz embedding satisfying the analogue of \eqref{eq:x2} with power $2q$ instead of the embedding $h$ and repeating the above argument completes the proof of \eqref{eq:expanders} for general $q\geq1$.
\end{proof}

A few historical comments are in order. Due to the existence of regular expander graph sequences, Theorem \ref{thm:expanders} implies that for arbitrarily large $n$, there exists an $n$-point metric space $(\MM_n,d_{\MM_n})$ such that if $\MM_n$ admits an embedding with bi-Lipschitz distortion $D$ into a finite-dimensional normed space $X$, then $\mathrm{dim}(X)\geq n^{c/D}$ for some universal constant $c\in(0,\infty)$. Therefore, Theorem \ref{thm:expanders} provides a negative answer to the question of \textcite{JL84} discussed in Section \ref{subsec:history}. A different negative answer to this question had been given in important work of \textcite{Mat96}, who devised an ingenious random family of metric spaces and showed that they satisfy this property using input from real algebraic geometry. It is worth mentioning that a precursor of Theorem \ref{thm:expanders} is a result of \textcite[Proposition~4.1]{LMN05}, who showed that if an $n$-vertex regular expander embeds in $\ell_\infty^d$ with bi-Lipschitz distortion at most $D$, \mbox{then $d\geq n^{c/D}$ for some universal constant $c\in(0,\infty)$.}

Quantitatively, Theorem \ref{thm:expanders} provides a sharp relation between the dimension of the target space $X$, the number of vertices of $G$ and the distortion $D$. Indeed, a classical theorem of \textcite{JLS87} asserts that for every $n\in\N$ and $D\geq1$, any $n$-point metric space $\MM$ admits a bi-Lipschitz embedding with distortion at most $D$ into some $d$-dimensional normed space $X$, where $d\lesssim_D n^{C/D}$ for some universal constant $C\in(0,\infty)$. This result was later refined by \textcite{Mat92}, who showed that one can always take $X=\ell_\infty^d$ as a target space in this statement.

\begin{rema} \label{rem:sharp3}
The optimality of Theorem \ref{thm:expanders} which follows from the works of \textcite{JLS87} and \textcite{Mat92} immediately implies that the  $O(\sqrt{\log\mathrm{dim}(X)})$ upper bound for the average distortion in Theorem \ref{thm:john} is sharp. Indeed, suppose that the $\tfrac{1}{2}$-snowflake of $X=\ell_\infty^d$ admitted an embedding into $\ell_2$ with quadratic average distortion $o(\sqrt{\log d})$. Then, the proof of Theorem \ref{thm:expanders} would show that if an $n$-vertex expander \mbox{embeds with bi-Lipschitz distortion $D$ in $\ell_\infty^d$, then}
\begin{equation}
\log n = o(D \log d).
\end{equation}
However, this inequality contradicts the embedding theorem of \textcite{Mat92}.
\end{rema}

Following the terminology of \textcite{Nao18}, we say that an infinite-dimensional Banach space $(X,\|\cdot\|_X)$ admits (quadratic) average dimension reduction with distortion \mbox{$D\in(1,\infty)$} if for any $n\in\N$ there exists $k_n=k_n^D(X)\in\N$ satisfying
\begin{equation}
\lim_{n\to\infty} \frac{\log k_n}{\log n} = 0
\end{equation}
such that the following condition holds. For any $n$ points $x_1,\ldots,x_n\in X$, there exists a subspace $F=F(x_1,\ldots,x_n)$ of $X$ with $\mathrm{dim}F\leq k_n$ and points $y_1,\ldots,y_n\in F$ satisfying $\|y_i-y_j\|_X\leq D \|x_i-x_j\|_X$ for every $i,j\in\{1,\ldots,n\}$ and
\begin{equation}
\frac{1}{n^2} \sum_{i,j=1}^n \|y_i-y_j\|_X^2 \geq \frac{1}{n^2} \sum_{i,j=1}^n \|x_i-x_j\|_X^2.
\end{equation}
As every finite metric space embeds isometrically in $\ell_\infty$, the aforementioned result of \textcite{Mat96} (or Theorem \ref{thm:expanders}) implies that $\ell_\infty$ does not admit average dimension reduction with any distortion $D>1$. The following tantalizing question remains open.

\begin{question} \label{q:l1}
Does $\ell_1$ admit average dimension reduction with any distortion $D>1$?
\end{question}

We note that the bi-Lipschitz analogue of Question \ref{q:l1} is answered by a famous theorem of \textcite{BC05} (see also \textcite{LN04} for a different influential proof) who showed that for arbitrarily large $n$ and $D>1$ there exists an $n$-point subset of $\ell_1$ which does not admit a bi-Lipschitz embedding into any subspace of $\ell_1$ of dimension at most $n^{c/D^2}$, where $c\in(0,\infty)$ is a universal constant.


\subsection{Average distortion embeddings of $\ell_p$ into $\ell_2$} 

In Theorem \ref{thm:john}, it was established that any finite-dimensional normed space $(X,\|\cdot\|_X)$ admits an embedding into $\ell_2$ with quadratic average distortion $O(\sqrt{\log(\mathrm{dim}(X)+1)})$ via the nonlinear spectral gap inequality \eqref{eq:estimate}. As explained in Remark \ref{rem:sharp3}, this estimate for the quadratic average distortion is asymptotically optimal yet, quite surprisingly, there exist many non-Hilbertian normed spaces which embed with constant quadratic average distortion in $\ell_2$. The following result is the main theorem of \textcite{Nao14}.

\begin{theo} \label{thm:lp}
There exists $C\in(0,\infty)$ such that for any $p\in(2,\infty)$, the normed space $\ell_p$ admits an embedding into $\ell_2$ with quadratic average distortion $Cp$.
\end{theo}

Theorem \ref{thm:lp} is established in \textcite{Nao14} via the nonlinear spectral gap inequality
\begin{equation} \label{eq:lp}
\forall \ p>2, \qquad \gamma(A,\|\cdot\|_{\ell_p}^2) \lesssim \frac{p^2}{1-\lambda_2(A)},
\end{equation}
which holds for any $\pi\in\triangle^{n-1}$ and any $\pi$-reversible stochastic matrix $A\in M_n(\R)$, and the duality principle of Theorem \ref{thm:duality}. Once again, \eqref{eq:lp} is proven in \textcite{Nao14} in the special case that $\pi$ is the uniform measure on $\{1,\ldots,n\}$ and $A$ is a symmetric stochastic matrix. The proof of the more general statement presented here (which is equivalent to Theorem \ref{thm:lp}) is identical. In \textcite{Nao14}, Theorem \ref{thm:lp} and \eqref{eq:lp} were used to give new lower bounds for the $\ell_p$-distortion of random connected $d$-regular graphs, Ramanujan graphs and abelian Alon--Roichman graphs, improving earlier results of \textcite{Mat97}. It is worth pointing out that \eqref{eq:lp} is no longer valid when $p\in[1,2)$.


\subsection{Expanders with respect to Banach spaces} \label{sec:super-expanders}

Combinatorial expanders are ubiquitous geometric objects whose metric structure is notoriously incompatible with Euclidean geometry. Nonlinear spectral gaps allow us to analyze non-Euclidean analogues of these exotic metrics.  Let $(\MM,d_\MM)$ be a metric space. A sequence $\{G_n\}_{n=1}^\infty$ of $d$-regular graphs with $|G_n|\to\infty$ is called an expander graph sequence with respect to $\MM$ if $\sup_{n\in\N}\gamma(A_{G_n}, d_\MM^2) <\infty$. If such graphs exist, we say that $(\MM,d_\MM)$ admits a sequence of $d$-regular expanders. The following influential observation on the embeddability of expanders is essentially due to \textcite{Mat97}.

\begin{prop} \label{prop:matousek}
Let $(\MM,d_\MM)$ be a metric space and fix $\gamma,q\in(0,\infty)$. Suppose that $G=(V,E)$ is a $d$-regular connected graph on $n$ vertices with $\gamma(A_G,d_\MM^q)\leq \gamma$. If $(G,d_G)$ embeds into $(\MM,d_\MM)$ with $q$-average distortion at most $D$, then $D\gtrsim \frac{\log_d n}{\gamma^{1/q}}$.
\end{prop}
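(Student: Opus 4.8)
The plan is to run the ``test the Poincar\'e inequality on the embedded graph'' argument of Matou\v{s}ek: one feeds the hypothetical embedding into the nonlinear spectral gap inequality for $A_G$ to bound from above the average $q$-th power displacement of the image points, and bounds the same average from below using the average-distortion guarantee together with the elementary fact that a bounded-degree graph has logarithmically large typical distances. To set things up, I would unravel the definition of $q$-average distortion for the finite metric space $G$ equipped with the uniform probability measure $\pi=(\tfrac1n,\dots,\tfrac1n)$ on $V$: there is a scaling factor $\sigma\in(0,\infty)$ and a map $f:G\to\MM$ which is $\sigma D$-Lipschitz from $(G,d_G)$ to $(\MM,d_\MM)$ --- in particular $d_\MM(f(a),f(b))\le\sigma D$ whenever $\{a,b\}\in E$, since adjacent vertices are at $d_G$-distance $1$ --- and which satisfies
\[
\frac1{n^2}\sum_{u,v\in V}d_\MM\big(f(u),f(v)\big)^q\ \ge\ \sigma^q\,\frac1{n^2}\sum_{u,v\in V}d_G(u,v)^q.
\]

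For the upper bound, note that since $G$ is an undirected $d$-regular graph the matrix $A_G$ is symmetric, hence $\pi$-reversible with respect to the uniform measure, and $\gamma(A_G,d_\MM^q)\le\gamma$ by hypothesis. Applying the defining inequality~\eqref{eq:nsg} of nonlinear spectral gaps to the points $f(1),\dots,f(n)\in\MM$, and using $\pi_i(A_G)_{ij}=\mathbf{1}_{\{i,j\}\in E}/(nd)$ together with $|E|=nd/2$, one obtains
\[
\frac1{n^2}\sum_{u,v\in V}d_\MM\big(f(u),f(v)\big)^q\ \le\ \gamma\cdot\frac2{nd}\sum_{\{a,b\}\in E}d_\MM\big(f(a),f(b)\big)^q\ \le\ \gamma\cdot\frac2{nd}\cdot(\sigma D)^q\cdot\frac{nd}2\ =\ \gamma\,(\sigma D)^q .
\]

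For the lower bound the one estimate with genuine content is a volume-growth count: a ball of radius $r$ about any vertex of a $d$-regular graph has at most $1+d+\dots+d^r<d^{r+1}$ vertices, so for every $u\in V$ at least $n/2$ vertices $v$ satisfy $d_G(u,v)\ge\lfloor\log_d(n/2)\rfloor\gtrsim\log_d n$ --- provided $n$ is large enough in terms of $d$ --- whence $\frac1{n^2}\sum_{u,v}d_G(u,v)^q\gtrsim(\log_d n)^q$; for $q\ge1$ one may instead quote~\eqref{eq:x4} and apply Jensen's inequality. Combining this with the average-distortion guarantee and the spectral-gap upper bound, the factors $\sigma^q$ cancel and one is left with $\gamma D^q\gtrsim(\log_d n)^q$, i.e.\ $D\gtrsim\gamma^{-1/q}\log_d n$, which is the assertion. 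In the remaining, bounded range of $n$ the conclusion is immediate from the trivial bounds $\frac1{n^2}\sum_{u\ne v}d_G(u,v)^q\ge\tfrac12$ and $\log_d n=O(1)$ fed into the same two displays.

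I do not anticipate any real obstacle: once the pieces are assembled the argument is a one-line averaging estimate of exactly the type used to prove Theorem~\ref{thm:expanders}. The only points requiring a little care are keeping track of the scaling factor $\sigma$ (which disappears in the end) and handling the degenerate small-$n$ regime where the volume-growth estimate is vacuous; neither is serious.
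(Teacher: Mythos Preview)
Your proposal is correct and follows essentially the same argument as the paper's proof: feed the $\sigma D$-Lipschitz average-distortion embedding into the nonlinear Poincar\'e inequality for $A_G$ to get the upper bound $\gamma(\sigma D)^q$, bound the average $d_G$-distance from below by the volume-growth count, and cancel $\sigma^q$. Your direct counting argument for the lower bound is in fact slightly cleaner than the paper's appeal to \eqref{eq:x4} plus Jensen, since it covers the full range $q\in(0,\infty)$ without needing convexity of $t\mapsto t^q$.
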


\begin{proof}
By the assumption, there exists $\sigma\in(0,\infty)$ and a $\sigma D$-Lipschitz map \mbox{$f:(G,d_G)\to(\MM,d_\MM)$ satisfying the average lower bound}
\begin{equation}
\frac{1}{n^2} \sum_{u,v\in V} d_\MM\big(f(u),f(v)\big)^q \geq \frac{\sigma^q}{n^2} \sum_{u,v\in V} d_G(u,v)^q \gtrsim \sigma^q(\log_d n)^q,
\end{equation}
where the second inequality follows from \eqref{eq:x4} and Jensen's inequality. On the other hand, by the definition \eqref{eq:nsg} of $\gamma(A,d_\MM^q)$, we have
\begin{equation}
\frac{1}{n^2} \sum_{u,v\in V} d_\MM\big(f(u),f(v)\big)^q \leq \frac{2\gamma}{dn} \sum_{\{a,b\}\in E} d_\MM\big( f(a),f(b)\big)^q \leq \gamma \sigma^q D^q,
\end{equation}
where the last inequality follows from the Lipschitz condition for $f$. Rearranging, we deduce the desired lower bound for the $q$-average distortion $D$.
\end{proof}

Deciding whether a given non-Euclidean metric space admits a sequence of expanders is a notoriously difficult problem in metric geometry, even when specified to normed spaces. By Proposition \ref{prop:matousek}, it is clear that if there exists a sequence $\{G_n\}_{n=1}^\infty$ of regular expanders with respect to a normed space $(X,\|\cdot\|_X)$, then $X$ cannot contain subspaces uniformly isomorphic to $\{\ell_\infty^m\}_{m=1}^\infty$ as it would then bi-Lipschitzly contain all finite metric spaces with uniform distortion. Normed spaces which do not uniformly contain $\{\ell_\infty^m\}_{m=1}^\infty$ are said to have finite cotype in Banach space theory jargon \parencite{Mau03}. Strikingly, this is the only known necessary condition for a normed space to admit an expander graph sequence and the following general question remains open.

\begin{question} \label{q:cotype}
Is every combinatorial expander also an expander with respect to any normed space of finite cotype?
\end{question}

Such implications, asserting that a classical spectral gap implies a nonlinear spectral gap, are currently only known for substantially smaller classes of normed spaces from works of \textcite{Mat97}, \textcite{Oza04}, \textcite{Pis10} and \textcite{NS11}. It is worth mentioning that even the following question, which is formally weaker than Question \ref{q:cotype} in view of Proposition \ref{prop:matousek}, remains open.

\begin{question} \label{q:bilip}
Does there exist a sequence of finite metric spaces $\{(\MM_n,d_{\MM_n})\}_{n=1}^\infty$ with $|\MM_n|\to\infty$ as $n\to\infty$ such that for any normed space $(X,\|\cdot\|_X)$ of finite cotype, the bi-Lipschitz distortion required to embed $\MM_n$ into $X$ satisfies $\msf{c}_X(\MM_n) \gtrsim_X \log|\MM_n|$?
\end{question}

A positive answer to Question \ref{q:bilip} would imply a striking dichotomy in the embeddability of finite metric spaces into infinite-dimensional normed spaces. If such a normed space $X$ does not have finite cotype, then it bi-Lipschitzly contains every finite metric space with distortion $1+\e$ for any $\e>0$ \parencite{Mau03}. On the other hand, if $X$ is an arbitrary infinite-dimensional space, then any finite metric space $\MM$ admits a bi-Lipschitz embedding into $X$ with distortion $O(\log|\MM|)$ by the theorems of \textcite{Dvo60} and \textcite{Bou85}. A positive answer to Question \ref{q:bilip} would imply that this bound is always optimal under the (necessary) assumption that $X$ has finite cotype.

In regard to Question \ref{q:cotype}, even the \emph{existence} of a sequence $\{G_n\}_{n=1}^\infty$ of regular graphs which are expanders with respect to any space of finite cotype remains unknown. The strongest available result in this direction is the following profound theorem of \textcite{Laf08}, whose proof is an ingenious combination of algebraic and vector-valued harmonic analytic methods. We say that a normed space $(X,\|\cdot\|_X)$ has nontrivial type if $X$ does not contain subspaces uniformly isomorphic to $\{\ell_1^m\}_{m=1}^\infty$ \parencite{Mau03}. Any space of nontrivial type has finite cotype, but the converse is not true (e.g.~for $\ell_1$).

\begin{theo} \label{thm:lafforgue}
There exists a sequence of regular graphs $\{G_n\}_{n=1}^\infty$ which is an expander graph sequence with respect to any normed space of non-trivial type.
\end{theo}

Lafforgue’s graphs can be obtained as Cayley graphs of finite quotients of co-compact lattices in $SL_3(\mathbb{Q}_p)$, where $p$ is a prime and $\mathbb{Q}_p$ is the field of $p$-adic rationals.

A completely different construction of a sequence of regular graphs which are expanders with respect to a large family of norms was presented in work of \textcite{MN14}. Theirs is a vector-valued adaptation of the zig-zag product construction of \textcite{RVW02} and the resulting graphs are expanders with respect to any normed space which admits an equivalent uniformly convex norm. Clearly any such space has nontrivial type but the converse is not true \parencite{Pis75b}. While we will not outline the argument of \textcite{MN14}, it is worth pointing out that it consists of a novel construction of a base graph along with an adaptation of the zig-zag iteration of \textcite{RVW02}. The necessity of the uniform convexity assumption in this argument stems from this iteration procedure. On the other hand, the construction of the base graph (which was straightforward in the case of combinatorial expanders) has raised influential questions in vector-valued harmonic analysis that led to investigations of independent interest (\cite{MN14}; \cite{EI20, EI21}).


\subsection{Expanders with respect to Alexandrov spaces}

A complete geodesic metric space $(\MM,d_\MM)$ is an Alexandrov space of nonpositive curvature (or a CAT(0) space) if for any quadruple of points $x,y,z,m\in \MM$ such that $m$ is a metric midpoint of $x$ and $y$, that is, $d_\MM(m,x)=d_\MM(m,y)=\tfrac{1}{2}d_\MM(x,y)$, we have
\begin{equation}
d_\MM(z,m)^2 \leq \frac{1}{2}d_\MM(z,x)^2+\frac{1}{2}d_\MM(z,y)^2 - \frac{1}{4}d_\MM(x,y)^2.
\end{equation}
If the reverse inequality holds true for any such quadruple $x,y,z,m\in\MM$, then $\MM$ is an Alexandrov space of nonnegative curvature. Alexandrov spaces of nonpositive (respectively nonnegative) curvature are (potentially singular) metric spaces which generalize Riemannian manifolds with nonpositive (resp.~nonnegative) sectional curvature.

An argument of \textcite{Wan98} shows that any regular combinatorial expander is also an expander with respect to any Hilbert manifold with a CAT(0) Riemannian metric (see also \cite[Corollary~4.10]{NS11}). The first systematic study of expanders with respect to (non-smooth) Alexandrov spaces of nonpositive curvature was undertaken by \textcite{MN15}, who showed the following theorem.

\begin{theo} \label{thm:cat0}
There exists a CAT(0) space $(\MM,d_\MM)$ and a sequence $\{G_n\}_{n=1}^\infty$ of 3-regular graphs such that $\sup_{n\in\N} \gamma(A_{G_n},d_\MM^2)<\infty$, yet a random $d$-regular graph ${\bf G}$ on $n$ vertices satisfies $\gamma(A_{\bf G}, d_\MM^2) \gtrsim (\log_d n)^2$ with probability $1-o_n(1)$ as $n\to\infty$.
\end{theo}

Theorem \ref{thm:cat0} reveals a striking difference between nonlinear spectral gaps with respect to Alexandrov spaces of nonpositive curvature and classical spectral gaps, as a random $d$-regular graph on $n$-vertices is a combinatorial expander with probability $1-o_n(1)$ as $n\to\infty$ for any fixed $d\in\N$ \parencite{Bol88}. The following question remains open.

\begin{question} \label{q:cat0}
Does every CAT(0) space admit a sequence of regular expanders? More ambitiously, does there exists a sequence of $O(1)$-regular graphs $\{G_n\}_{n=1}^\infty$ with $|G_n|\to\infty$ such that $\sup_{n\in\N}\gamma(A_{G_n},d_\MM^2)<\infty$ for every CAT(0) space $(\MM,d_\MM)$?
\end{question}

A positive answer to the stronger statement in Question \ref{q:cat0} would imply (in view of Proposition \ref{prop:matousek}) the existence of arbitrarily large finite metric spaces requiring logarithmic distortion to be embedded in any Alexandrov space of nonpositive curvature. The following question was asked by \textcite{EMN19}.

\begin{question} \label{q:bilcat}
Does there exist a sequence of finite metric spaces $\{(\MM_n,d_{\MM_n})\}_{n=1}^\infty$ with $|\MM_n|\to\infty$ as $n\to\infty$ such that for any CAT(0) space $(\NN,\|\cdot\|_\NN)$, the bi-Lipschitz distortion required to embed $\MM_n$ into $\NN$ satisfies $\msf{c}_\NN(\MM_n) \gtrsim_\NN \log|\MM_n|$?
\end{question}

In the dual nonnegative curvature regime, the analogue of Question \ref{q:cat0} was answered by \textcite{ANN18}, who showed that there exists an Alexandrov space of nonnegative curvature which does not admit any sequence of regular expanders. Moreover, they asked the following dual to Question \ref{q:bilcat}.

\begin{question} \label{q:pos}
Does there exist an Alexandrov space of nonnegative curvature $(\NN,d_\NN)$ such that any finite metric space $(\MM,d_\MM)$ embeds into $\NN$ with bi-Lipschitz distortion $\msf{c}_\NN(\MM)\lesssim \sqrt{\log|\MM|}$?
\end{question}

In their paper, they specifically asked Question \ref{q:pos} for the concrete Alexandrov space $\mc{P}_2(\R^3)$, which is the space of all Borel probability measures $\mu$ on $\R^3$ satisfying $\int_{\R^3} \|x\|_{\ell_2^3}^2 \,\diff \mu(x)<\infty$ equipped with the Wasserstein $\msf{W}_2$-distance.


\subsection{Coarse non-universality}

Let $(\MM,d_\MM)$ and $(\NN,d_\NN)$ be two metric spaces and $\omega,\Omega:[0,\infty)\to[0,\infty)$ two moduli satisfying $\omega\leq\Omega$ pointwise and $\lim_{t\to\infty} \omega(t)=\infty$. A mapping $f:\MM\to\NN$ is a coarse embedding with lower and upper moduli $\omega$ and $\Omega$ respectively if
\begin{equation}
\forall \ x,y\in\MM, \qquad \omega\big(d_\MM(x,y)\big) \leq d_\NN\big(f(x),f(y)\big) \leq \Omega\big(d_\MM(x,y)\big).
\end{equation}
A family of metric spaces $\{(\MM_\alpha,d_{\MM_\alpha})\}_\alpha$ is said to embed equi-coarsely into a metric space $(\NN,d_\NN)$ if there exist two moduli $\omega,\Omega:[0,\infty)\to[0,\infty)$ satisfying $\omega\leq\Omega$ pointwise and $\lim_{t\to\infty} \omega(t)=\infty$ and a family of coarse embeddings $\{f_\alpha:\MM_\alpha\to\NN\}_\alpha$ with lower and upper moduli $\omega$ and $\Omega$. The pertinence of nonlinear spectral gaps in coarse geometry stems from the\mbox{ following influential observation of \textcite{Gro00,Gro03}.}

\begin{prop} \label{prop:gromov}
Fix $d\in\N$, $p\in(0,\infty)$ and let $(\MM,d_\MM)$ be a metric space. Suppose that $\{G_n=(V_n,E_n)\}_{n=1}^\infty$ is a sequence of connected $d$-regular graphs with \mbox{$|V_n|\to\infty$} and $\sup_{n\in\N} \gamma(A_{G_n},d_\MM^p)<\infty$. Then, the family of graphs $\{(G_n,d_{G_n})\}_{n=1}^\infty$ equipped with their shortest path distances does not equi-coarsely embed into $\MM$.
\end{prop}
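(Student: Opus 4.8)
The plan is to run Gromov's argument \parencite{Gro00,Gro03}: assume the conclusion fails and feed a hypothetical equi-coarse embedding into the nonlinear Poincar\'e inequality \eqref{eq:nsg}, so that the uniform bound $\sup_{n}\gamma(A_{G_n},d_\MM^p)<\infty$ collides with the (at least) exponential growth of the graphs $G_n$. Concretely, suppose towards a contradiction that there are moduli $\omega\le\Omega$ with $\lim_{t\to\infty}\omega(t)=\infty$ and coarse embeddings $f_n\colon(G_n,d_{G_n})\to(\MM,d_\MM)$ having lower modulus $\omega$ and upper modulus $\Omega$.

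First I would perform a harmless normalization, replacing $\omega$ by its nondecreasing minorant $t\mapsto\inf_{s\ge t}\omega(s)$; this is still pointwise below $\Omega$, still a valid lower modulus for every $f_n$, and still tends to infinity, so I may assume $\omega$ nondecreasing. Next, write $n\eqdef|V_n|$, let $\pi\in\triangle^{n-1}$ be the uniform distribution on $\{1,\dots,n\}$, and observe that the normalized adjacency matrix $A_{G_n}$ is symmetric, hence $\pi$-reversible, since $G_n$ is $d$-regular. Applying \eqref{eq:nsg} to the configuration $\big(f_n(v)\big)_{v\in V_n}$ with the exponent $p$ and the constant $\Gamma\eqdef\sup_{n\in\N}\gamma(A_{G_n},d_\MM^p)<\infty$ gives
\[
\frac{1}{n^2}\sum_{u,v\in V_n}d_\MM\big(f_n(u),f_n(v)\big)^p\;\le\;\frac{2\Gamma}{dn}\sum_{\{a,b\}\in E_n}d_\MM\big(f_n(a),f_n(b)\big)^p .
\]

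The two sides are then bounded in opposite directions. On the right, every edge $\{a,b\}\in E_n$ satisfies $d_{G_n}(a,b)=1$, so the upper modulus yields $d_\MM(f_n(a),f_n(b))\le\Omega(1)$; since $G_n$ has $dn/2$ edges, the right-hand side is at most $\Gamma\,\Omega(1)^p$, independently of $n$. On the left, I would use that balls in a connected $d$-regular graph grow at most exponentially --- exactly the counting behind \eqref{eq:x4} --- so that for every $u\in V_n$ at least half of the vertices $v$ satisfy $d_{G_n}(u,v)\ge r_n$, where $r_n\eqdef\lfloor\log_d(n/2)\rfloor$ and $r_n\to\infty$ as $n\to\infty$; by monotonicity $d_\MM(f_n(u),f_n(v))\ge\omega(r_n)$ for each such pair, hence the left-hand side is at least $\tfrac12\,\omega(r_n)^p$. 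Combining the two estimates forces $\omega(r_n)^p\le2\Gamma\,\Omega(1)^p$ for all $n$, contradicting $\omega(r_n)\to\infty$.

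I do not expect a genuine obstacle here: the statement is, in essence, Proposition \ref{prop:matousek} read in reverse, with the coarse upper modulus $\Omega$ of the embedding playing exactly the role the Lipschitz constant played there, evaluated only at the edge length $1$, while the lower modulus $\omega$ is evaluated at the graph radius $r_n$, which grows like $\log_d n$. The one place that calls for a small amount of care is the passage to a nondecreasing lower modulus, without which the uniform lower bound $\omega(r_n)$ would not survive the averaging over all pairs of vertices.
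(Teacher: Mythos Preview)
Your argument is correct and follows essentially the same route as the paper's proof: apply the nonlinear Poincar\'e inequality to the images $f_n(V_n)$, bound the edge side by $\Gamma\,\Omega(1)^p$ using the upper modulus at scale $1$, bound the pair side from below by $\tfrac12\omega(r_n)^p$ using the exponential growth of balls, and derive a contradiction with $\omega(r_n)\to\infty$. Your explicit passage to a nondecreasing minorant of $\omega$ is a point the paper leaves implicit when it writes $\omega(d_{G_n}(u,v))^p\ge\omega(\lfloor\log_d(|V_n|/2)\rfloor)^p$, so your version is in fact slightly more careful there.
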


\begin{proof}
Let $\gamma\eqdef\sup_{n\in\N} \gamma(A_{G_n},d_\MM^p)<\infty$. Suppose that there exist two moduli $\omega,\Omega:[0,\infty)\to[0,\infty)$ with $\lim_{t\to\infty} \omega(t)=\infty$ and mappings $f_n:V_n\to \MM$ with
\begin{equation} \label{eq:coarse-assume}
\forall \ x,y\in V_n, \qquad \omega\big(d_{G_n}(x,y)\big) \leq d_\MM\big(f(x),f(y)\big) \leq \Omega\big(d_{G_n}(x,y)\big).
\end{equation}
By definition of nonlinear spectral gaps, we have
\begin{equation}
\frac{1}{|V_n|^2} \sum_{u,v\in V_n} d_\MM\big(f_n(u),f_n(v)\big)^p \leq \frac{2\gamma}{d|V_n|} \sum_{\{a,b\}\in E_n}d_\MM\big(f_n(a),f_n(b)\big)^p.
\end{equation}
Moreover, using the upper modulus, we get
\begin{equation}
\frac{2\gamma}{d|V_n|} \sum_{\{a,b\}\in E_n}d_\MM\big(f_n(a),f_n(b)\big)^p \stackrel{\eqref{eq:coarse-assume}}{\leq} \frac{2\gamma}{d|V_n|} \sum_{\{a,b\}\in E_n}\Omega(1)^p =\gamma \Omega(1)^p.
\end{equation}
Finally, as each graph $G_n$ is $d$-regular, for any $u\in V_n$ there exist at least $\frac{|V_n|}{2}$ vertices $v\in V_n$ such that $d_{G_n}(u,v)\geq \lfloor\log_d(|V_n|/2)\rfloor$. Thus, the lower modulus gives
\begin{equation*}
\frac{1}{|V_n|^2} \sum_{u,v\in V_n} d_\MM\big(f_n(u),f_n(v)\big)^p \stackrel{\eqref{eq:coarse-assume}}{\geq} \frac{1}{|V_n|^2} \sum_{u,v\in V_n} \omega\big(d_{G_n}(u,v)\big)^p \geq \frac{\omega(\lfloor\log_d(|V_n|/2)\rfloor)^p}{2}.
\end{equation*}
Combining all the above, we deduce that
\begin{equation}
\forall \ n\in\N, \qquad \omega(\lfloor\log_d(|V_n|/2)\rfloor)^p \leq 2\gamma\Omega(1)^p,
\end{equation}
which clearly contradicts the coarse condition $\lim_{t\to\infty} \omega(t)=\infty$.
\end{proof}

An important consequence of Gromov's observation is that if $(\MM,d_\MM)$ admits a sequence of regular expanders $\{G_n\}_{n=1}^\infty$, then there exists a metric space (e.g.~the disjoint union $\bigsqcup_{n\geq1} (G_n,d_{G_n})$) which does not admit a coarse embedding into $\MM$. Consequently, the mere existence of combinatorial expanders implies that Hilbert spaces are not coarsely universal which is a well-known theorem of \textcite{DGLY02}. Moreover, Lafforgue's Theorem \ref{thm:lafforgue} implies the existence of a metric space which does not admit a coarse embedding into any Banach space of non-trivial type. The coarse non-universality of this class was previously established in work of \textcite{MN08} by proving that Banach spaces of non-trivial type with cotype $q$ have \emph{sharp} metric cotype $q$. Understanding whether every Banach space of cotype $q$ has sharp metric cotype $q$ is the central open problem in the theory of metric cotype of Banach spaces; see \textcite{GMN11} for the best known results to date. If this was the case, then the following (currently open) question on coarse embeddings would have a negative answer.

\begin{question} \label{q:coarse-cotype}
Does every separable metric space embed coarsely into some Banach space of finite cotype?
\end{question}

It follows from Proposition \ref{prop:gromov} that a negative answer to Question \ref{q:coarse-cotype} would also be a consequence of the existence of a sequence of regular graphs $\{G_n\}_{n=1}^\infty$ which are expanders with respect to any normed space of finite cotype simultaneously, let alone from a positive answer to the much stronger Question \ref{q:cotype}. 

Despite the fact that Question \ref{q:cat0} on the existence of expanders with respect to Alexandrov spaces of nonpositive curvature remains open, the coarse non-universality of this class was established by \textcite{EMN19}, thus answering a question raised by \textcite{Gro93}. The main technical contribution of this work is the proof that every CAT(0) space $\MM$ has sharp metric cotype 2 which formally implies that $\ell_q$ does not admit a coarse embedding in $\MM$ for any $q>2$. In contrast to this result, the very surprising fact that there exist coarsely universal Alexandrov spaces of nonnegative curvature was proven by \textcite{ANN18}.


\subsection{Approximate nearest neighbor search}

Fix a parameter $c>1$. The $c$-Approximate Nearest Neighbor Search problem is defined as follows. Given an $n$-point dataset $\mc{P}$ in some metric space $(\MM,d_\MM)$, we want to build a data structure\footnote{For the purposes of this survey, a data structure of size $M$ is an array $A[1\ldots M]$ of numbers (the ``memory'') along with an algorithm which, given a point $q\in \MM$, returns a point $\hat{p}\in\mc{P}$.} that, given any query point $q\in\MM$, returns a point $\hat{p}\in\mc{P}$ with $d_\MM(q,\hat{p})\leq c\min_{p\in\mc{P}}d_\MM(q,p)$. In practice, this problem can be reduced to its ``decision version'' (see \cite{HPIM12}), which is the $c$-Approximate \emph{Near} Neighbor Search ($c$-ANN) problem at a \emph{pre-fixed} distance scale $r>0$. In the $c$-ANN problem at scale $r$, we are again given an $n$-point dataset $\mc{P}$ in some metric space $(\MM,d_\MM)$ and we want to build a data structure that, given any query point $q\in\MM$ for which there exists a point $p^\ast\in \mc{P}$ with $d_\MM(q,p^\ast)\leq r$, returns a point $\hat{p}\in\mc{P}$ with $d_\MM(q,\hat{p})\leq cr$. The main parameters to optimize are the \emph{space} the data structure occupies and the \emph{time} it takes to answer a query. A majority of the research conducted on this problem has focused on $d$-dimensional normed spaces rather than general metric spaces and moreover most of the algorithms in the literature are \emph{randomized} in the sense that they return a random point $\hat{p}$ satisfying $d_\MM(q,\hat{p})\leq cr$ with probability at least $1-\delta$ for some pre-fixed confidence parameter $\delta\in(0,1)$.

The first approaches to the $c$-ANN problem for $d$-dimensional norms produced data-independent data structures, in which the memory cells accessed by the algorithm do not depend on the dataset $\mc{P}$ but only on the query point $q$. In particular, building such data structures via (oblivious) metric dimension reduction has been used with great success for the Hilbert space $\ell_2^d$ (\cite{IM98}; \cite{HPIM12}), the  hypercube $\{-1,1\}^d$ equipped with the Hamming distance \parencite{KOR00} and spaces which (effectively) embed in them \parencite{AIK09, Ngu14}. While dimension reduction techniques yield data structures with polynomial space for these norms, these results are often far from practical due to the large degree of said polynomial. To overcome this barrier, \textcite{IM98} introduced an influential technique called \emph{Locality-Sensitive Hashing} (LSH) relying on (data-independent) randomized space  partitions. Somewhat informally, a distribution $\mc{D}$ over a family of partitions of $\MM$ is called \emph{sensitive} at scale $r$ up to error $c$ if any two points at distance at most $r$ are $\mc{D}$-likely to belong in the same cluster of the partition and any two points at distance at least $cr$ are $\mc{D}$-unlikely to do so (where the implicit probabilities affect the space and time requirements of the data structure). As shown by \textcite{IM98}, a (computationally efficient) sensitive distribution over random partitions can serve as  a pre-filter for the dataset $\mc{P}$ as the query point $q$ is very likely to be indistinguishable from its near neighbors but is unlikely to collide with points $p$ having $d_\MM(q,p)>cr$. Using LSH, they were able to improve the space requirements over the existing $c$-ANN algorithms to almost linear for large enough accuracy parameters $c>1$. We refer to the thorough survey of \textcite{AIR18} for a detailed account of these and other contributions on the $c$-ANN problem and further references.

Despite these advances towards the $c$-ANN problem, researchers proved strong impossibility results \parencite{MNP07, OWZ14} for the existence of data-independent data structures arising from LSH, thus creating the necessity for the development of efficient data-\emph{dependent} algorithms. Historically, the first such result was proven by \textcite{Ind01} for $\ell_\infty^d$. In recent years, this approach has gained a lot of momentum, especially in view of the works of \textcite{AIK09} for the Ulam metric, \textcite{AINR14, AR15} for $\ell_2^d$ and \textcite{ANNRW17} for 1-symmetric norms. A  breakthrough in this direction was presented in the work of \textcite{ANNRW18b} who showed the following theorem for \emph{general} $d$-dimensional normed spaces. It is worth emphasizing that their result does not a priori give any bound on the running time of the algorithm, it just restricts the number of memory locations the data structure is allowed to probe.

\begin{theo} \label{thm:annrw}
Fix $\e\in(0,1)$ and let $X$ be a $d$-dimensional normed space. There exists a randomized data structure for \mbox{$O\big(\frac{\log d}{\e^2}\big)$-ANN over $X$ with the following properties:}
\begin{enumerate}
\item[$\bullet$] The space used by the data structure is $n^{1+\e}\cdot d^{O(1)}$;
\item[$\bullet$] The query procedure probes $n^\e\cdot d^{O(1)}$ words in memory.
\end{enumerate}
\end{theo}

In order to prove Theorem \ref{thm:annrw}, the authors introduced a geometric parameter called the cutting modulus $\Xi(\MM,\e)$ associated to a metric space $(\MM,d_\MM)$ and a parameter $\e\in(0,1)$, which governs the following data-dependent partitioning scheme:~every finite dataset in $\MM$ either has a subset of proportional size (measured appropriately) which is contained in a ball of radius $\Xi(\MM,\e)$ or admits a cut which is $\e$-sparse. Relying on this notion, \mbox{they were able to show the following general partitioning theorem.}
 
\begin{theo} \label{thm:partitioning}
Let $(\MM,d_\MM)$ be a finite metric space and fix $n\in\N$, $\e\in(0,\tfrac{1}{4})$. There exists a  collection $\mc{C}$ of subsets of $\MM$ with $\log|\mc{C}| = O\big(\log|\MM|\log(\log|\MM|/\e)\big)$ such that for any $n$-point dataset $\mc{P}$ in $\MM$, we have one of the following two properties:
\begin{enumerate}
\item[$\bullet$] Either there exists $x_0\in\MM$ and $R\leq\Xi(\MM,\e)$ such that $|\mc{P}\cap B_\MM(x_0,R)|\geq\tfrac{n}{50}$, or
\item[$\bullet$] There exists a subcollection $\{S_1,\ldots,S_m\} \subseteq \mc{C}$ such that
\begin{equation}
\forall \ i\in\{1,\ldots,m\}, \qquad \frac{n}{50} \leq |S_i\cap\mc{P}| \leq \frac{49n}{50}
\end{equation}
and for every $x,y\in\MM$ with $d_\MM(x,y)\leq1$, we have
\begin{equation}
\#\big\{i\in\{1,\ldots,m\}: \ {\bf 1}_{S_i}(x)\neq {\bf 1}_{S_i}(y)\big\} \leq 50\e m.
\end{equation}
\end{enumerate} 
\end{theo}

Theorem \ref{thm:partitioning} suggests a very natural LSH with approximation $O(\Xi(\MM,\e))$ since, at each step of the algorithm, we either have a dense ball of radius $\Xi(\MM,\e)$ or we have a collection of subsets with a distribution that decreases the size of the dataset and rarely splits the query from its nearby points in the dataset. The relevance of those results with the subject of this survey stems from the fact that Theorem \ref{thm:estimate} implies that if $X$ is a normed space, then the cutting modulus satisfies
\begin{equation} \label{eq:bound-cut}
\Xi(X,\e) \lesssim \frac{\log (\mathrm{dim}(X)+1)}{\e^2}.
\end{equation}
The main idea of the proof of \eqref{eq:bound-cut} is to apply \eqref{eq:estimate} to the adjacency matrices of geometric graphs associated to finite subsets of $X$. If such a graph does not have a subset of proportional size (with respect to the underlying stationary measure) contained in a ball of radius $\Omega(\log(\mathrm{dim}(X)))$, then  the nonlinear spectral gap inequality \eqref{eq:estimate} implies that it also cannot have large (classical) spectral gap and thus admits a sparse cut by Cheeger's inequality. A combination of Theorem \ref{thm:partitioning} and \eqref{eq:bound-cut} implies Theorem \ref{thm:annrw}. We refer to the work of \textcite{ANNRW18b} for the precise definition of the cutting modulus and the proofs of these results.

In their follow-up work, \textcite{ANNRW18a} proved the existence of data structures for ANN over $d$-dimensional normed spaces with slightly worse (but still subpolynomial) approximation and reasonable bounds for the running time. Some of their results still rely on elements of the theory of nonlinear spectral gaps, whereas others use the existence of a remarkable uniform homeomorphism between spheres of Banach spaces which originates in the resolution of the distortion problem by \textcite{OS94}. 


\bigskip

\printbibliography

\end{document}
